\newtheorem{thm}{Theorem}[section]
\newtheorem{prop}[thm]{Proposition}
\newtheorem{coro}[thm]{Corollary}
\theoremstyle{remark}
\newcommand{\tdef}[1]{\textcolor{blue}{\emph{#1}}}
\newcommand{\horiz}{\begin{center}\rule{0.3\textwidth}{0.5pt}\end{center}}
\newcommand{\motz}{\mathcal{M}}
\newcommand{\schr}{\mathcal{S}}
\newcommand{\dyck}{\mathcal{D}}
\newcommand{\type}{\textrm{Type}}
\newcommand{\cls}{\textrm{cls}}
\newcommand{\covered}{\lessdot}
\newcommand{\cont}{\textrm{cont}}
\newcommand{\ds}{\textrm{ds}}
\newcommand{\OEIS}[1]{\href{http://oeis.org/#1}{#1}}
\author{Wenjie Fang \\ Université Paris-Est Marne-la-Vallée, LIGM (UMR 8094), \\ CNRS, ENPC, ESIEE Paris, France\thanks{Wenjie Fang was supported by Austria FWF Grant I2309-N35 and P27290 during the conduction of this work.}}
\title{A partial order on Motzkin paths}
\begin{document}

\maketitle

\abstract{The Tamari lattice, defined on Catalan objects such as binary trees and Dyck paths, is a well-studied poset in combinatorics. It is thus natural to try to extend it to other families of lattice paths. In this article, we fathom such a possibility by defining and studying an analogy of the Tamari lattice on Motzkin paths. While our generalization is not a lattice, each of its connected components is isomorphic to an interval in the classical Tamari lattice. With this structural result, we proceed to the enumeration of components and intervals in the poset of Motzkin paths we defined. We also extend the structural and enumerative results to Schr\"oder paths. We conclude by a discussion on the relation between our work and that of Baril and Pallo (2014).}

\horiz

\section{Introduction}

The Tamari lattice is a poset defined on Catalan objects such as Dyck paths and binary trees. First proposed by Tamari \cite{tamari}, it is a well-studied object in combinatorics, and is also the basis of many other objects, such as the associahedron \cite{associahedron} and the Loday-Ronco Hopf algebra \cite{loday1998hopf}. It also has several generalizations, such as the $m$-Tamari lattice \cite{bergeron-preville} and the generalized Tamari lattice \cite{PRV2014extension}. Recently, there is a trend on the enumerative and bijective study of intervals in the Tamari lattice \cite{chapoton-tamari,BB2009intervals,interval-poset,sticky}, from which we can see the rich combinatorics there to be mined.

Since the Tamari lattice can be defined on Dyck paths (see Proposition~2.1 in \cite{BB2009intervals}), it is natural to ask for its extension to other types of lattice paths. In this article, we take the first step in this direction by defining a partial order on Motzkin paths, a family of lattice paths not far away from Dyck paths, using rules similar to that of the Tamari lattice. We find that the poset of Motzkin paths of length $n$ defined in this way is not connected, therefore not a lattice in general. However, there is a bijection of Callan \cite{callan} from Motzkin paths to a certain family of Dyck paths that preserves the order structure. With this bijection, we prove that each connected component of the poset of Motzkin paths is isomorphic to a certain generalized Tamari lattice, which is in turn isomorphic to an interval in the classical Tamari lattice. We then study the enumerative aspects of the poset of Motzkin paths, such as the number of connected components and the number of intervals. We find that the generating function of intervals in the poset of Motzkin paths, weighted by the number of diagonal steps and contacts (details are postponed to later sections), is algebraic. This result is obtained by solving a functional equation ``with one catalytic variable'', as treated in \cite{BMJ}. The same study is then extended to Schr\"oder paths, where similar results are established.

There are previous efforts on defining partial orders on Motzkin paths. In \cite{MR2202340}, Ferrari and Pinzani constructed partial orders of different families of lattice paths, including Motzkin paths. They also proved that, in some cases, including Dyck paths, Motzkin paths and Schr\"oder paths, the defined partial order is a distributive lattice. Their construction, which is based on weak dominance of paths, is clearly different from ours. In \cite{MR3128387}, Baril and Pallo analyzed the sub-poset of the Tamari lattice induced by their so-called ``Motzkin words''. The result of Baril and Pallo is similar to ours in the sense that both posets can be defined on the same set of paths, but also fundamentally different in the sense that we consider different orders on these objects. The relation of \cite{MR3128387} and our work will be discussed in the last section.

This article is organized as follows. In Section~2, as preliminary, we give the definition of our poset on Motzkin paths, and some related definitions useful in later sections. Then, in Section~3, we establish some structural results on our poset of Motzkin paths. Section~4 consists of an enumerative study of our poset of Motzkin paths, including finding out the generating function of intervals in the defined poset. The whole set of results is then transferred to Schr\"oder paths in Section~5. We conclude with some remarks in Section~6.

\section{Preliminaries} \label{sec:prelim}

We consider lattices paths on $\mathbb{Z}^2$ starting at $(0,0)$, ending on the diagonal $y=x$ without crossing it, and composed by three types of steps: north step $N = (0, 1)$, east step $E = (1, 0)$ and diagonal step $D = (1, 1)$. Such a path $P$ is called a \tdef{Motzkin path}, and if $P$ consists of only north and east steps, then it is also called a \tdef{Dyck path}. It is clear that all Dyck paths are Motzkin paths. We say that a path is of size $n$ if it consists of $n$ steps. We denote by $\dyck_n$ and $\motz_n$ the set of Dyck paths and Motzkin paths of size $n$ respectively. It is clear that $\dyck_{2n+1}$ is empty for any natural number $n$. We should also note that not all Motzkin paths of the same size end at the same point. The set of all Motzkin paths (resp. Dyck paths) is denoted by $\motz$ (resp. $\dyck$). Both Motzkin paths and Dyck paths can be viewed as words in the alphabet $\{N, E, D\}$.

In the following, we will always use $P, Q$ and their variants for Motzkin paths, and $R, S$ and their variants for Dyck paths. We denote by $\epsilon$ the empty path, and we take the convention that $\epsilon$ is not counted as a Dyck path or a Motzkin path.

It is well-known that the number of Dyck paths of size $2n$ is given by the $n^{\rm th}$ Catalan number $\mathrm{Cat}_n = \frac1{2n+1} \binom{2n+1}{n}$. Motzkin paths of size $n$ are given by the so-called $n^{\rm th}$ Motzkin number, whose formula is not as nice as that of Catalan numbers. The first few Motzkin numbers (index starting at $1$) are
\[ 1, 2, 4, 9, 21, 51, 127, 323, \ldots \]
This is the sequence \OEIS{A001006} on the Online Encyclopedia of Integer Sequences (OEIS).

We now consider a poset defined on Motzkin paths of size $n$, inspired by the Tamari lattice on Dyck paths (see Proposition~2.1 of \cite{BB2009intervals}). Given a Motzkin path $P$, if a lattice point $v$ on $P$ is preceded by an east step and succeeded by a north step or a diagonal step, then $v$ is called a \tdef{valley}. We can also consider valleys as endpoints of consecutive east steps. Then, for a valley $v$ in $P$, let $w$ be the next lattice point on $P$ with the same horizontal distance to the main diagonal. We denote by $S$ the sub-path of $P$ between $v$ and $w$. Since $v$ is a valley, $S$ is preceded by an east step. By exchanging $S$ with the preceding east step, we obtain a new path $Q$, which is also a Motzkin path, and we say that $Q$ \tdef{covers} $P$, denoted by $P \lessdot_\motz Q$. Figure~\ref{fig:motz-cover} illustrates two examples of covering. Taking all possibilities of valley points $v$ in every Motzkin path $P$ of length $n$, we construct a covering relation, which is then extended by transitivity to a partial order $\leq_\motz$ on $\motz_n$. This partial order $(\leq_\motz, \motz_n)$ is our subject of study. The same procedure applied to Dyck paths of length $2n$ gives the Tamari lattice of order $n$, denoted by $(\leq_D, \dyck_{2n})$.

\begin{figure}
  \begin{center}
    \includegraphics[scale=0.8,page=1]{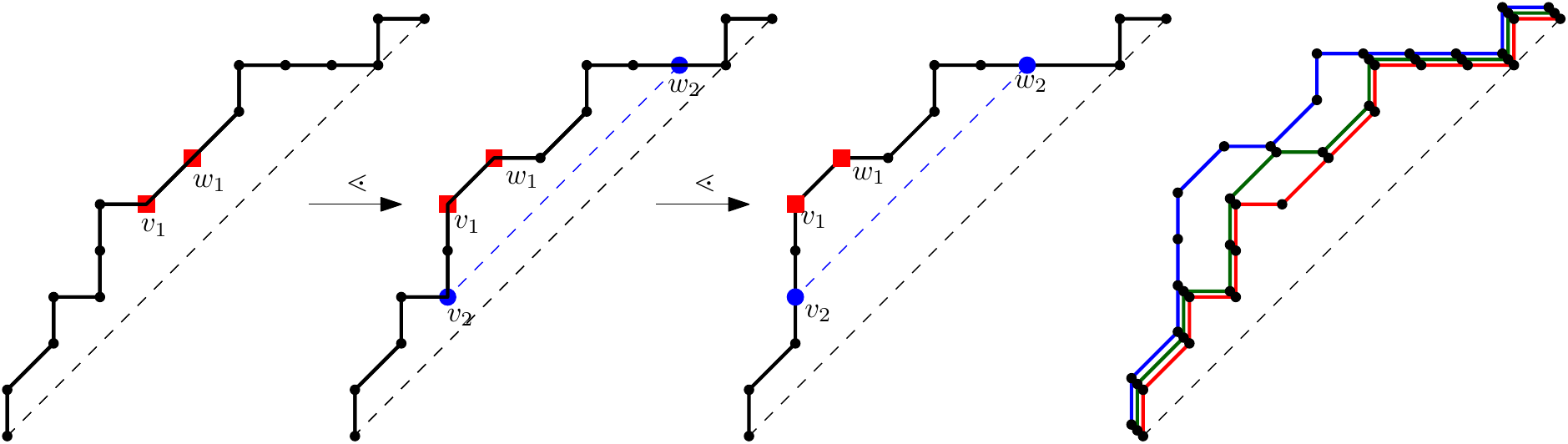}
  \end{center}
  \caption{Examples of covering relations on Motzkin paths}
  \label{fig:motz-cover}
\end{figure}

Unlike the Tamari lattice on Dyck paths, the partial order $(\leq_\motz, \motz_n)$ defined above is not a lattice. In fact, its Hasse diagram is not even connected. Figure~\ref{fig:motz-comp} illustrates some connected components of the Hasse diagram of $(\leq_\motz, \motz_6)$. To understand the structure of its connected components, we need a few definitions to distinguish different sub-classes of Motzkin paths.

\begin{figure}
  \centering
  \includegraphics[scale=0.7,page=3]{motzkin-figure.pdf}
  \caption{Some connected components of the Hass diagram of $(\leq_\motz, \motz_6)$.}
  \label{fig:motz-comp}
\end{figure}

We say that a diagonal step $D$ is of height $h$ if it ends with $y$-coordinate $h$. The \tdef{class} of a Motzkin path $P$, denoted by $\cls(P)$, is the sequence of the heights of its diagonal steps in increasing order. For instance, for the Motzkin path $P = NNDEDNNEEDE$, we have $\cls(P) = (3,4,7)$. As another example, the Motzkin paths in Figure~\ref{fig:motz-cover} are all in the class $(2,6,7)$. Equivalently, the $i$-th component of $\cls(P)$ is given by the number of north steps and diagonal steps that come before the end of the $i$-th diagonal step (including itself). The length of $\cls(P)$, which is the number of diagonal steps $D$ in $P$, is denoted by $|P|_D$. The following proposition, whose proof is straightforward from the definition of $(\leq_\motz, \motz_n)$, shows how the classes govern connected components in $(\leq_\motz, \motz_n)$.

\begin{prop} \label{prop:comp-type}
  For two Motzkin paths $P, Q$ such that $P \leq_\motz Q$, we have $\cls(P) = \cls(Q)$.
\end{prop}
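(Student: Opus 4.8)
The plan is to reduce the statement to a single covering step. Since $\leq_\motz$ is the partial order generated by the covering relation $\lessdot_\motz$ on $\motz_n$, a relation $P \leq_\motz Q$ is witnessed by a chain $P = P_0 \lessdot_\motz P_1 \lessdot_\motz \cdots \lessdot_\motz P_k = Q$ with $k \geq 0$, so it suffices to prove that $\cls$ is invariant under one cover: if $P \lessdot_\motz Q$ then $\cls(P) = \cls(Q)$. The general statement then follows by applying this along the chain (the case $k = 0$, i.e.\ $P = Q$, being trivial).

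First I would fix notation for the covering move. Let $Q$ be obtained from $P$ at a valley $v$, as in the definition, and write $P = \alpha\, E\, S\, \beta$, where $E$ is the east step of $P$ whose endpoint is $v$, $S$ is the sub-path of $P$ from $v$ to the next lattice point $w$ at the same horizontal distance from the diagonal, $\alpha$ is the part of $P$ strictly before that east step, and $\beta$ is the part strictly after $w$. By construction $Q = \alpha\, S\, E\, \beta$. Next I would use the ``endpoint-height'' description of $\cls$ recorded just before the statement: the height of a diagonal step is the number of north and diagonal steps weakly preceding it, that is, the $y$-coordinate of its endpoint. Thus $\cls(P)$ and $\cls(Q)$ are the sorted sequences of $y$-coordinates of endpoints of the $D$ steps of $P$ and of $Q$, and it is enough to check that these two multisets coincide.

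Then I would examine the $D$ steps of $P$ according to whether they lie in $\alpha$, in $S$, or in $\beta$; the same trichotomy applies verbatim to $Q$, since $\alpha$, $S$, $\beta$ are sub-words of both. Diagonal steps in $\alpha$ do not move, $\alpha$ being a common prefix. For a diagonal step in $S$, the point at which $S$ begins has $y$-coordinate equal to the number of $N$ and $D$ steps in $\alpha$, both in $P$ (where $S$ is preceded by $\alpha E$ and the east step leaves the $y$-coordinate unchanged) and in $Q$ (where $S$ is preceded by $\alpha$ alone); since $S$ is carried over unchanged, each of its $D$ steps keeps its endpoint-height. For a diagonal step in $\beta$, note that $E S$ and $S E$ contain exactly the same steps, hence the same number of $N$ and $D$ steps, so $\beta$ begins at the same $y$-coordinate in $P$ and in $Q$, and all its $D$ steps keep their heights. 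Combining the three cases yields equality of the two multisets of endpoint-heights, hence $\cls(P) = \cls(Q)$.

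I do not expect a real obstacle: the argument is essentially careful bookkeeping. The single observation that makes it work, and the one point I would state cleanly, is that sliding an east step past the sub-path $S$ changes no $y$-coordinate inside $S$, precisely because east steps are horizontal; everything else is immediate from the definitions, including the fact --- granted by the construction --- that $Q$ is again a Motzkin path, so that $\cls(Q)$ is well defined.
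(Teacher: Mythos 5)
Your proof is correct and is precisely the ``straightforward from the definition'' argument the paper alludes to without writing out: reduce to a single covering move $P = \alpha\, E\, S\, \beta \mapsto Q = \alpha\, S\, E\, \beta$ and observe that sliding the horizontal east step past $S$ changes no $y$-coordinate of any diagonal step's endpoint, in $\alpha$, $S$, or $\beta$. Nothing to add.
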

%\begin{proof}
%  We only need to treat the case where $Q$ covers $P$. Suppose that $v$ is the valley on $P$ that leads to $Q$ in the construction of the covering path, and $S$ the segment involved. To get $Q$ from $P$, we exchange the east step $E$ before $S$, which means geometrically pushing $S$ to the left. This operation clearly does not change the height of any diagonal step in $P$, meaning that $\cls(Q) = \cls(P)$.
%\end{proof}

This result implies that Motzkin paths in the same connected component of $(\leq_\motz, \motz_n)$ are in the same class. A natural question thus arises: do the classes characterize all connected components? In other words, given two paths $P$ and $Q$ of the same class, are they always in the same connected components? In Figure~\ref{fig:motz-comp}, the answer seems to be yes. To answer this question, and to look at the structure of all the connected components of $(\leq_\motz, \motz_n)$, we need to take a detour over Dyck paths.

\section{Motzkin paths and Dyck paths}

Following \cite{nonsep}, for a Dyck path $R \in \dyck_{2n}$ of length $2n$, we define its \tdef{type}, denoted by $\type(R)$, to be a word $w$ of length $n-1$, such that the $i$-th letter $w_i$ is $N$ if the $i$-th north step $N_i$ in $R$ is followed by an \emph{east step}, and $w_i=E$ otherwise. The notion of type for Dyck paths corresponds in fact to the canopy of a binary tree defined in \cite{PRV2014extension}, which indicates for each leaf in prefix order whether it is the left or right child of it parent. We now consider an interval $I=[R,S]$ in the Tamari lattice $(\leq_D, \dyck_{2n})$, where $R$ (resp. $S$) is its minimal (resp. maximal) element. We can thus identify intervals in the Tamari lattices with pairs of comparable elements. We say that the interval $[R,S]$ is \tdef{synchronized} if $\type(R) = \type(S)$. The left side of Figure~\ref{fig:bij-dyck-motz} is an example of a synchronized interval of type $ENNENENENNNENE$.

\begin{figure}
  \begin{center}
    \includegraphics[page=2,scale=0.8]{motzkin-figure.pdf}
  \end{center}
  \caption{Bijection $\phi$ on Dyck paths avoiding $NNN$}
  \label{fig:bij-dyck-motz}
\end{figure}

We now consider Dyck paths that avoid three consecutive north steps $NNN$. We denote by $\dyck^\circ$ the set of such Dyck paths. It is clear that a Dyck path $R$ is in $\dyck^\circ$ if and only if its type $\type(R)$ avoids $EE$. In \cite{callan}, Callan proposed the following bijection $\phi$ from $\dyck^\circ$ to $\motz$, which is reformulated here for our need. Given a Dyck path $R$ from $\dyck^\circ$, it takes the form
\[
R = N^{a_1}E^{b_1}N^{a_2}E^{b_2} \cdots N^{a_k}E^{b_k},
\]
where $a_i \in \{1,2\}$ and $b_i > 0$ for all indices $i$. Since $R$ avoids the pattern $NNN$ as a word, all $a_i$'s are either $1$ or $2$. We now define a function $\xi$ with $\xi(1)=D$ and $\xi(2)=N$, and we define $\phi(R)$ by
\[
\phi(R) = \xi(a_1)E^{b_1-1}\xi(a_2)E^{b_2-1} \cdots \xi(a_k)E^{b_k-1}.
\]
In other words, for each maximal sub-word of the form $N^{a_i}$, which is followed by at least one $E$, if there is only one $N$, then we replace $NE$ by $D$; if there are two $N$'s, then we replace $NNE$ by $N$. Geometrically, it is clear that $\phi(R)$ never goes beneath the main diagonal, thus is a Motzkin path. The reverse direction $\phi^{-1}$ is just replacing $D$ by $NE$ and $N$ by $NNE$ in a Motzkin path. This is clearly a bijection between $\dyck^\circ$ and $\motz$.

Two examples of $\phi$ are given in Figure~\ref{fig:bij-dyck-motz}, where the two paths of a synchronized interval avoiding $NNN$ are mapped to two Motzkin paths. In this example, we notice that the resulted Motzkin paths have the same type and are comparable in the Motzkin poset. In the rest of this section, we will prove that this phenomenon is not a coincidence.

We have the following property of $\phi$ concerning the type of a Dyck path and the class of its image of $\phi$.

\begin{prop} \label{prop:phi-type}
  Given $R, S \in \dyck^\circ_{2n}$ two Dyck paths of length $2n$ avoiding $NNN$, we have $\type(R) = \type(S)$ if and only if $\cls(\phi(R)) = \cls(\phi(S))$.
\end{prop}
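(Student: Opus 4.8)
The plan is to translate both $\type$ and $\cls$ into statements about the exponent sequence $(a_1, b_1, \ldots, a_k, b_k)$ of a Dyck path $R = N^{a_1}E^{b_1}\cdots N^{a_k}E^{b_k}$ in $\dyck^\circ$, and to observe that each is determined by, and determines, the same piece of data about $R$. First I would record the elementary fact that for $R \in \dyck^\circ$, the type $\type(R)$ is the word obtained by reading, for each maximal block $N^{a_i}$, the letters $E^{a_i-1}N$ (the first $a_i - 1$ north steps of the block are each followed by a north step, hence contribute $E$ to the type, and the last one is followed by an east step, hence contributes $N$) --- except that the very last letter of this concatenation is dropped, since $\type$ has length $n-1$ rather than $n$. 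Consequently, reading $\type(R)$ left to right and cutting it at each occurrence of $N$ (and appending a final $N$ to account for the dropped letter) recovers exactly the sequence $(a_1, a_2, \ldots, a_k)$: the $i$-th block between consecutive $N$'s has length $a_i - 1$, so $a_i$ is one more than that length. Thus $\type(R)$ determines and is determined by the sequence $(a_1, \ldots, a_k)$ (note that $n = \sum a_i$ is also recovered).

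Next I would compute $\cls(\phi(R))$ in terms of the same sequence. By definition $\phi(R) = \xi(a_1)E^{b_1 - 1}\cdots \xi(a_k)E^{b_k-1}$, and the diagonal steps of $\phi(R)$ are exactly the letters $\xi(a_i) = D$, i.e. those indices $i$ with $a_i = 1$. Using the characterization of $\cls$ given in the preliminaries --- the $j$-th entry of $\cls$ is the number of north-and-diagonal steps up to and including the $j$-th diagonal step --- the height of the diagonal step coming from block $i$ (when $a_i = 1$) equals the number of indices $i' \le i$ with $\xi(a_{i'}) \in \{D, N\}$, which is simply $i$ itself, since every block contributes exactly one letter $\xi(a_{i'})$, and that letter is always $N$ or $D$, never $E$. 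Therefore
\[
  \cls(\phi(R)) = \{\, i : a_i = 1 \,\},
\]
read as an increasing sequence. So $\cls(\phi(R))$ is exactly the set of positions of the $1$'s in $(a_1, \ldots, a_k)$, while $\type(R)$ encodes the full sequence $(a_1, \ldots, a_k) \in \{1,2\}^k$. Since a sequence with entries in $\{1,2\}$ is completely determined by the set of positions where it equals $1$ (together with its length $k$), and conversely, the two data are equivalent provided $k$ is the same for $R$ and $S$; but $k$ is forced, because $R$ and $S$ both have length $2n$, and $\phi(R), \phi(S)$ both have size $n$ with $k$ equal to the number of non-$E$ steps, so comparing either $\type$ or $\cls$ already presupposes a common ambient size. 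This gives both implications at once.

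The only genuinely delicate point, and the one I would write out most carefully, is the off-by-one bookkeeping in the first step: $\type(R)$ has length $n - 1$, not $n$, so the naive "read blocks of $E$'s between $N$'s" recipe is missing its final letter, and one must check that the final block $N^{a_k}$ is still unambiguously recovered (it is, because $R$ ends in $E^{b_k}$ with $b_k > 0$, so the last north step is followed by an east step, and the would-be final type letter is $N$ — exactly the letter we append). I expect no other obstacle; once the dictionary $\type(R) \leftrightarrow (a_i)_i \leftrightarrow \cls(\phi(R))$ is set up, the proposition is immediate. An alternative, essentially equivalent route is to argue directly on $\phi$: $\phi$ replaces $NE \mapsto D$ and $NNE \mapsto N$, and one checks that whether the $i$-th "unit" of $\phi^{-1}$ output starts with one or two $N$'s is detected by $\cls$ on the Motzkin side exactly as it is detected by $\type$ on the Dyck side; I would mention this as a remark but carry out the exponent-sequence computation as the main argument.
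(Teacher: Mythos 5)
Your proposal is correct and follows essentially the same route as the paper: both arguments reduce $\type(R)$ and $\cls(\phi(R))$ to the exponent sequence $(a_1,\ldots,a_k)\in\{1,2\}^k$ and show each datum determines the other (the paper via $\nu_t(a_1)\cdots\nu_t(a_k)=\type(R)N$ and the identity $c_i=d_i$, which match your block-by-block computation exactly). Your extra care about recovering $k$ from the fixed ambient size $n$ is a point the paper leaves implicit, but it is the same proof.
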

\begin{proof}
  Since $R$ avoids $NNN$, it takes the form $R = N^{a_1}E^{b_1}N^{a_2}E^{b_2} \cdots N^{a_k}E^{b_k}$, with $a_i \in \{1, 2\}$ and $b_i > 0$ for all indices $i$. Therefore, the type of $R$ depends entirely on the values of all $a_i$'s. More precisely, let $\nu_t$ be the function with $\nu_t(1)=N$ and $\nu_t(2)=EN$, then we have $\nu_t(a_1) \nu_t(a_2) \cdots \nu_t(a_k) = \type(R)N$. Conversely, given $\type(R)$, we can determine the values of all $a_i$'s. We can thus say that the sequence $(a_1, \ldots, a_k)$ encodes bijectively $\type(R)$.

  Now, suppose that there are $\ell$ terms among all $a_i$'s that take the value $1$, then there are exactly $\ell$ diagonal steps in $\phi(R)$. Let $d_1, \ldots, d_\ell$ be all indices with $a_{d_i}=1$. It is clear that, given all $d_i$'s, we can recover all $a_i$'s. We now look at the class of $\phi(R)$. Suppose that $\cls(\phi(R)) = (c_1, \ldots, c_\ell)$, and we recall that the $c_i$ is the number of north and diagonal steps that comes before the $i^{\rm th}$ diagonal step, including itself, in $\phi(R)$. Therefore, $c_i = d_i$ by construction. Given $\cls(\phi(R))$, we can recover all $a_i$'s. We can thus say that $(a_1, \ldots, a_k)$ encodes bijectively $\cls(\phi(R))$.

  As the sequence $(a_1, \ldots, a_k)$ encodes bijectively both $\type(R)$ and $\cls(\phi(R))$, we conclude that $\type(R) = \type(S)$ if and only if $\cls(\phi(R)) = \cls(\phi(S))$.
\end{proof}

We denote by $\cdot$ the concatenation operator of paths. We now consider how $\phi$ interacts with both partial orders $(\leq_D, \dyck^\circ_{2n})$ and $(\leq_\motz, \motz_n)$. We say that a Dyck path (or Motzkin path) is \tdef{primitive} if it only touches the diagonal at its start and end points. It is easy to see that a primitive Dyck path takes the form $N \cdot R \cdot E$, with $R$ a Dyck path. For a primitive Motzkin path, either it takes the form $N \cdot P \cdot E$ with $P$ a Motzkin path, or it consists of one single diagonal step. We have the following property of $\phi$.

\begin{prop} \label{prop:phi-primitive}
  Let $R \in \dyck^\circ_{2n}$ be a Dyck path avoiding $NNN$, then $R$ is primitive if and only if $P = \phi(R)$ is a primitive Motzkin path.
\end{prop}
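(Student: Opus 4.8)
The plan is to exploit the explicit description of the inverse map: $R = \phi^{-1}(P)$ is obtained from the word $P$ by the letter-by-letter substitution $D \mapsto NE$, $N \mapsto NNE$, $E \mapsto E$. For a lattice point $v$ on a path write $\delta(v) = y - x$ for the signed horizontal distance of $v$ to the main diagonal, so that $\delta(v)$ is the number of $N$-steps minus the number of $E$-steps read before reaching $v$ (diagonal steps contribute $0$), and note that a Motzkin or Dyck path is primitive precisely when $\delta$ vanishes only at the two endpoints.

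First I would record the effect of the substitution on lattice points. Reading $P$ and $R = \phi^{-1}(P)$ in parallel, each lattice point $v$ of $P$ has a well-defined image $\hat v$ in $R$, namely the lattice point reached in $R$ after processing the images of all steps of $P$ preceding $v$; the assignment $v \mapsto \hat v$ is strictly increasing along the two paths (hence injective), and it sends the start, resp. the end, of $P$ to the start, resp. the end, of $R$. Since each step of $P$ is replaced by a factor with the same net change in $\delta$ — indeed $D$ and $NE$ both have net $\delta$-change $0$, while $N$ and $NNE$ both have net $\delta$-change $+1$ — accumulating gives $\delta(\hat v) = \delta(v)$ for every lattice point $v$ of $P$. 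Moreover, the lattice points of $R$ that are \emph{not} of the form $\hat v$ are exactly the interior points created inside an expanded $D$ or $N$: inside $NE$ there is one such point, at $\delta$-value $\delta_0 + 1$, and inside $NNE$ there are two, at $\delta$-values $\delta_0 + 1$ and $\delta_0 + 2$, where $\delta_0 \ge 0$ is the $\delta$-value of the lattice point of $P$ just before the step being expanded (nonnegative because $P \in \motz$). Hence every new lattice point of $R$ has $\delta \ge 1$.

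With this in hand the proposition is immediate. A lattice point $w$ of $R$ satisfies $\delta(w) = 0$ if and only if $w = \hat v$ for some lattice point $v$ of $P$ with $\delta(v) = 0$, since all the new points have $\delta \ge 1$. As $v \mapsto \hat v$ is an order-preserving injection matching the endpoints of $P$ with those of $R$, the set of diagonal contacts of $R$ is exactly the image of the set of diagonal contacts of $P$, the two endpoint contacts corresponding to one another. Therefore $R$ has a diagonal contact other than its endpoints if and only if $P$ does; equivalently, $R$ is primitive if and only if $P$ is primitive. (The only degenerate case is $P = D$, a single diagonal step, where $R = NE$; both are primitive, consistently.)

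I do not expect a genuine obstacle here; the argument is essentially bookkeeping. The one point needing a little care is the claim that the interior points created by the substitution lie strictly above the diagonal, which is exactly where the hypothesis $P \in \motz$ (so every lattice point of $P$ has $\delta \ge 0$) enters, and which is what makes the two primitivity conditions coincide rather than only one implying the other. One could alternatively argue directly from the block decomposition $R = N^{a_1}E^{b_1}\cdots N^{a_k}E^{b_k}$ used in the proof of Proposition~\ref{prop:phi-type}, checking that the boundary points between consecutive blocks of $R$ and the corresponding boundary points between the segments $\xi(a_i)E^{b_i-1}$ of $P = \phi(R)$ carry equal $\delta$-values while all remaining lattice points of both paths have $\delta \ge 1$; this is the same verification organized differently.
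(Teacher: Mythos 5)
Your proof is correct and takes essentially the same route as the paper's: both arguments reduce to the observation that the substitution between $P$ and $R$ preserves the distance to the diagonal at corresponding lattice points while every newly created point lies strictly above it. The paper organizes this as the equivalence of the partial-sum conditions $\sum_{i=1}^t (a_i - b_i) > 0$ and $\sum_{i=1}^t (\mu(a_i) - b_i + 1) > 0$ over the blocks $N^{a_i}E^{b_i}$, with $\mu(a)=a-1$, which is precisely the block-decomposition variant you sketch in your final remark.
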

\begin{proof}
  Since $R$ can be written as $R = N^{a_1}E^{b_1}N^{a_2}E^{b_2} \cdots N^{a_k}E^{b_k}$, with $a_i \in \{1,2\}$ and $b_i > 0$, we will work on the $a_i$'s and $b_i$'s instead. By definition, $R$ is primitive if and only if $\sum_{i=1}^t (a_i - b_i) > 0$ for all $0 < t < k$. It is because we only need to check whether the path touches the diagonal at the end of consecutive east steps.

  Now, we know that $P = \phi(R) = \xi(a_1)E^{b_1-1}\xi(a_2)E^{b_2-1} \cdots \xi(a_k)E^{b_k-1}$, with $\xi(1) = D$ and $\xi(2)=N$. Using the same reasoning for $R$, we know that $P$ is primitive if and only if $\sum_{i=1}^t (\mu(a_i) - b_i + 1) > 0$ for all $0 < t < k$, with $\mu(a)$ defined by $\mu(1) = 0$, $\mu(2) = 1$. We observe that $\mu(a) = a-1$, therefore, $a_i - b_i = \mu(a_i) - b_i + 1$. We conclude by the observation that the two conditions of being primitive for $R$ and for $P$ are equivalent.
\end{proof}

The covering relations in both $(\leq_D, \dyck_{2n})$ and $(\leq_\motz, \motz_n)$ can be reformulated as follows. Given two Motzkin paths $P, Q$, we have $Q$ covers $P$ if and only if we can write $P = P_1 \cdot E \cdot P_2 \cdot P_3$ with $P_2$ a non-empty primitive Motzkin path, such that $Q = P_1 \cdot P_2 \cdot E \cdot P_3$. The condition also holds for Dyck paths. We now prove the following cornerstone result.

\begin{thm} \label{thm:phi-cover}
  Let $R, S$ be two Dyck paths avoiding $NNN$, and $P = \phi(R)$, $Q = \phi(S)$ their corresponding Motzkin paths. Then $P \leq_\motz Q$ if and only if $R \leq_D S$ and $\type(R) = \type(S)$, that is, $[R,S]$ is a synchronized interval that avoids $NNN$.
\end{thm}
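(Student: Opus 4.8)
My plan is to reduce the statement to the level of covering relations in both posets. For the implication ``$P\leq_\motz Q$ $\Rightarrow$ $[R,S]$ synchronized and avoiding $NNN$'', the type part is immediate: Proposition~\ref{prop:comp-type} gives $\cls(P)=\cls(Q)$, hence $\type(R)=\type(S)$ by Proposition~\ref{prop:phi-type}; moreover $R\in\dyck^\circ$ (since $P=\phi(R)$), so $S\in\dyck^\circ$ as well, having the same $EE$-free type. For the order part, the key observation is that $\phi^{-1}$ is just the word morphism $D\mapsto NE$, $N\mapsto NNE$, $E\mapsto E$, so it commutes with concatenation. Thus it is enough to check that a single Motzkin covering goes to a single Dyck covering: if $P=P_1\cdot E\cdot P_2\cdot P_3\lessdot_\motz P_1\cdot P_2\cdot E\cdot P_3=P'$ with $P_2$ a non-empty primitive Motzkin path (the reformulation of covering relations), then applying the morphism $\phi^{-1}$ gives $\phi^{-1}(P)=\phi^{-1}(P_1)\cdot E\cdot \phi^{-1}(P_2)\cdot \phi^{-1}(P_3)$ and $\phi^{-1}(P')=\phi^{-1}(P_1)\cdot \phi^{-1}(P_2)\cdot E\cdot \phi^{-1}(P_3)$, and $\phi^{-1}(P_2)$ is a non-empty primitive Dyck path by Proposition~\ref{prop:phi-primitive}; the same reformulation for Dyck paths yields $\phi^{-1}(P)\lessdot_D\phi^{-1}(P')$. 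Chaining along a saturated $\lessdot_\motz$-chain from $P$ to $Q$ then gives $R\leq_D S$.

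For the converse, the main preliminary step is to describe how $\type$ behaves along a Dyck covering. Writing such a covering as $T=T_1\cdot E\cdot T_2\cdot T_3\lessdot_D T_1\cdot T_2\cdot E\cdot T_3=T'$ with $T_2$ non-empty primitive, a direct inspection of the step immediately preceding $T_2$ shows that exactly one of two things happens: either $T_1$ ends with an east step (equivalently, the east step being moved is the last of a run of length at least $2$), and then $\type(T)=\type(T')$; or $T_1$ ends with a north step (which is then not the last north step of the path), and then $\type(T')$ is obtained from $\type(T)$ by turning a single letter $N$ into $E$. In particular $\type$ is weakly increasing along coverings for the coordinatewise order on $\{N,E\}^{n-1}$ with $N<E$. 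Hence, if $R\leq_D S$ with $\type(R)=\type(S)$, along any saturated chain from $R$ to $S$ the type can never actually change (such a change could not be undone further up), so every element of $[R,S]$ has type $\type(R)$; as $\type(R)$ avoids $EE$, the whole interval $[R,S]$ lies in $\dyck^\circ$, and every covering in the chain preserves the type.

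It remains to show that a type-preserving Dyck covering $T\lessdot_D T'$ between elements of $\dyck^\circ$ maps under $\phi$ to a Motzkin covering $\phi(T)\lessdot_\motz\phi(T')$; together with the previous paragraph and transitivity this gives $\phi(R)\leq_\motz\phi(S)$, i.e.\ $P\leq_\motz Q$, completing the proof. For this I would use the block decomposition $T=N^{a_1}E^{b_1}\cdots N^{a_k}E^{b_k}$: a covering corresponds to choosing a valley between two consecutive blocks $E^{b_j}$ and $N^{a_{j+1}}$, with primitive factor $T_2=N^{a_{j+1}}E^{b_{j+1}}\cdots N^{a_m}E^{b_m}$ a union of consecutive blocks, and ``type-preserving'' translates into $b_j\geq 2$. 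Then $T'$ has the same exponents $a_i$, with $b_j$ decreased and $b_m$ increased by $1$, so the formula defining $\phi$ shows that $\phi(T)$ and $\phi(T')$ differ only by moving one east step from the run after $\xi(a_j)$ to the run after $\xi(a_m)$; that is, $\phi(T)=P_1\cdot E\cdot P_2\cdot P_3$ and $\phi(T')=P_1\cdot P_2\cdot E\cdot P_3$ with $P_2=\phi(N^{a_{j+1}}E^{b_{j+1}}\cdots N^{a_m}E^{b_m})$, which is a non-empty primitive Motzkin path by Proposition~\ref{prop:phi-primitive}; so $\phi(T)\lessdot_\motz\phi(T')$ by the covering reformulation. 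The main obstacle is the case analysis of $\type$ along a Dyck covering, together with identifying exactly which east step is moved and confirming that it sits in a run of length $\geq 2$ precisely in the type-preserving case; once that behaviour is pinned down, the monotonicity argument and the block-decomposition computation are routine.
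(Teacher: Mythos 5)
Your proposal is correct, and its core mechanism is the same as the paper's: reduce to covering relations, use the reformulation of coverings via a primitive factor swapped with a preceding east step, invoke Proposition~\ref{prop:phi-primitive} to transfer primitivity across $\phi$, and observe that type preservation along a Dyck covering is exactly the condition that the east step being moved is not the only one in its run. Where you genuinely diverge is in the ``if'' direction. The paper proves the covering-level equivalence ($P\covered_\motz Q$ iff $R\covered_D S$ and $\type(R)=\type(S)$) and asserts this ``clearly implies'' the theorem; but a saturated Tamari chain from $R$ to $S$ could a priori leave the set of $NNN$-avoiding paths of type $\type(R)$, so the implication really rests on the fact, cited from \cite{PRV2014extension} just after the theorem, that each type class is an interval of the Tamari lattice. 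You close this gap explicitly and more elementarily with your monotonicity lemma: along any Dyck covering the type either is unchanged or has exactly one letter flipped from $N$ to $E$, so the type is weakly increasing and must be constant along any chain between two elements of equal type. That is a nice self-contained substitute for the interval property, and it is the main added value of your write-up. Two smaller presentational differences: in the forward direction you obtain $\type(R)=\type(S)$ via $\cls$ and Propositions~\ref{prop:comp-type} and~\ref{prop:phi-type}, whereas the paper reads it off directly from the fact that $\phi^{-1}$ sends every letter to a word ending in $E$; and in the ``if'' direction you compute with the block decomposition $N^{a_1}E^{b_1}\cdots N^{a_k}E^{b_k}$ where the paper argues that $R_1$ and $R_2$ both end in $E$ so $\phi$ acts independently on the three segments. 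One minor imprecision to fix: the primitive factor $T_2$ need not end at the end of the maximal run $E^{b_m}$ of $T$ (the point $w$ can be followed by further east steps), so $T_2$ is not always ``a union of consecutive blocks''; your exponent bookkeeping ($b_j$ decreases by one, $b_m$ increases by one) and the identification of $P_2=\phi(T_2)$ remain valid, but the phrasing should allow $T_2$ to end partway through that run.
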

\begin{proof}
  We will prove a stronger result: $P \covered_\motz Q$ if and only if $R \covered_D S$ and $\type(R) = \type(S)$, which clearly implies our claim.

  We first prove the ``only if'' part. Since $P \covered_\motz Q$, by the primitive path reformulation of $\covered_D$, we can write $P = P_1 \cdot E \cdot P_2 \cdot P_3$ such that $Q = P_1 \cdot P_2 \cdot E \cdot P_3$, with $P_2$ a non-empty primitive Motzkin path. Then, in the bijection $\phi^{-1}$, since the east steps are left untouched, we have $R = R_1 \cdot E \cdot \phi^{-1}(P_2) \cdot R_3$, where $R_1$ (resp. $R_3$) is obtained from $P_1$ (resp. $P_3$) using the same substitution as in $\phi^{-1}$. We also have $S = R_1 \cdot \phi^{-1}(P_2) \cdot E \cdot R_3$. By Proposition~\ref{prop:phi-primitive}, the Dyck path $\phi^{-1}(P_2)$ is also primitive. We thus conclude that $R \covered_\dyck S$. For the type of $R$ and $S$, we notice that the substitution in $\phi^{-1}$ transforms all possible steps $N$, $E$, $D$ into paths $NNE$, $E$, $NE$, all ending in $E$. Therefore, $R_1$ ends in $E$, meaning that swapping $\phi^{-1}(P_2)$ with $E$ in $P$ does not change the type.

  For the ``if'' part, since $R \covered_D S$, we can write $R = R_1 \cdot E \cdot R_2 \cdot R_3$ such that $S = R_1 \cdot R_2 \cdot E \cdot R_3$, with $R_2$ a non-empty primitive Dyck path, which begins with $N$ and ends with $E$. As $\type(R)=\type(S)$, the path $R_1$ must end in $E$ to avoid change of type. Since both $R_1$ and $R_2$ ends in $E$, in the substitution of $\phi$, all segments $R_1, R_2, R_3$ are independent. We thus have $P = P_1 \cdot E \cdot \phi(R_2) \cdot P_3$ and $Q = P_1 \cdot \phi(R_2) \cdot E \cdot P_3$, with $P_1$ and $P_3$ obtained respectively from $R_1$ and $R_3$ with the substitution of $\phi$. By Proposition~\ref{prop:phi-primitive}, the Motzkin path $\phi(R_2)$ is also primitive. We thus conclude that $P \covered_\motz Q$.
\end{proof}

Let $\dyck_{2n}(\nu)$ be the set of Dyck paths of type $\nu$ (which is a word in $N, E$). It is known in \cite{PRV2014extension} that, for any $\nu$, the Tamari lattice restricted to $\dyck_{2n}(\nu)$ is an interval. We denote this restriction by $(\leq_D, \dyck_{2n}(\nu))$. We have the following corollary on the structure of $(\leq_\motz, \motz_n)$.

\begin{coro} \label{coro:phi-cover}
  The poset $(\leq_\motz, \motz_n)$ is isomorphic to the union of intervals $(\leq_D, \dyck_{2n}(\nu))$ with all possible $\nu$ that avoids $EE$. The isomorphism is given by $\phi^{-1}$. Furthermore, each connected component in $(\leq_\motz, \motz_n)$ contains exactly all the paths in $\motz_n$ of a certain class.
\end{coro}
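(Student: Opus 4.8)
The plan is to deduce Corollary~\ref{coro:phi-cover} by combining the results already proved: Theorem~\ref{thm:phi-cover} controls the order, Proposition~\ref{prop:phi-type} relates classes and types, Proposition~\ref{prop:comp-type} gives one half of the statement on components, and the theorem of \cite{PRV2014extension} identifies the blocks $\dyck_{2n}(\nu)$ as intervals. No essentially new argument is needed; the only points to watch are the matching of the underlying sets (a size count) and the behaviour of covering relations under restriction to a block.

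First I would check that $\phi$ restricts to a bijection $\dyck^\circ_{2n} \to \motz_n$. Writing $R = N^{a_1}E^{b_1}\cdots N^{a_k}E^{b_k} \in \dyck^\circ$ of size $2m$, so that $\sum_i a_i = \sum_i b_i = m$ with each $a_i \in \{1,2\}$, the path $\phi(R)$ has $k$ non-east steps (the $\xi(a_i)$) and $\sum_i (b_i-1) = m-k$ east steps, hence size $m$; so $\phi$ maps $\dyck^\circ_{2n}$ into $\motz_n$, and being a bijection $\dyck^\circ \to \motz$ it restricts to a bijection there. Since, as noted just before Proposition~\ref{prop:phi-type}, a Dyck path lies in $\dyck^\circ$ exactly when its type avoids $EE$, we obtain the set partition $\dyck^\circ_{2n} = \bigsqcup_\nu \dyck_{2n}(\nu)$ over all words $\nu$ avoiding $EE$ (equivalently, over those $\nu$ with $\dyck_{2n}(\nu) \neq \emptyset$).

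Next, for the order statement, I would simply read off Theorem~\ref{thm:phi-cover}: for $P, Q \in \motz_n$ one has $P \leq_\motz Q$ if and only if $\phi^{-1}(P) \leq_D \phi^{-1}(Q)$ and $\type(\phi^{-1}(P)) = \type(\phi^{-1}(Q))$, i.e.\ if and only if $\phi^{-1}(P)$ and $\phi^{-1}(Q)$ lie in a common block $\dyck_{2n}(\nu)$ and are comparable inside it. This is precisely the order of the disjoint union $\bigsqcup_\nu (\leq_D, \dyck_{2n}(\nu))$, so $\phi^{-1}$ is a poset isomorphism onto it; and by \cite{PRV2014extension} each summand is an interval of the Tamari lattice. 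This gives the first sentence of the corollary.

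Finally, for the connected components: if $P$ and $Q$ are in the same component, then applying Proposition~\ref{prop:comp-type} along each edge of a path joining them in the Hasse diagram gives $\cls(P) = \cls(Q)$. Conversely, if $\cls(P) = \cls(Q)$, then by Proposition~\ref{prop:phi-type} the images $\phi^{-1}(P), \phi^{-1}(Q)$ have the same type $\nu$, hence both lie in $\dyck_{2n}(\nu)$. This block is an interval of the Tamari lattice, so it is connected in its Hasse diagram and is order-convex (no third path can be squeezed strictly between two of its elements); therefore every covering relation inside $\dyck_{2n}(\nu)$ is a covering relation of the whole Tamari lattice, between two $NNN$-avoiding paths of the same type $\nu$. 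By Theorem~\ref{thm:phi-cover} each such cover is carried by $\phi$ to a cover in $(\leq_\motz, \motz_n)$, so applying $\phi$ to a Hasse path from $\phi^{-1}(P)$ to $\phi^{-1}(Q)$ inside $\dyck_{2n}(\nu)$ yields a Hasse path from $P$ to $Q$ in $\motz_n$. Hence $P$ and $Q$ are in the same component, and combining the two directions, each component of $(\leq_\motz,\motz_n)$ consists of exactly the Motzkin paths of $\motz_n$ in one fixed class. I do not expect a genuine obstacle here, since the substance is already in Theorem~\ref{thm:phi-cover}; the two things I would take care to state explicitly are the size bookkeeping making $\phi$ restrict to $\dyck^\circ_{2n}\to\motz_n$, and the order-convexity of $\dyck_{2n}(\nu)$ that lets Theorem~\ref{thm:phi-cover} be applied to the covering relations inside a block.
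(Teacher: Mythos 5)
Your proof is correct and follows essentially the same route as the paper: it deduces the corollary by combining Theorem~\ref{thm:phi-cover}, Propositions~\ref{prop:phi-type} and~\ref{prop:comp-type}, and the interval property of $(\leq_D,\dyck_{2n}(\nu))$ from \cite{PRV2014extension}. The only divergence is that you make explicit two points the paper's proof treats implicitly --- the size bookkeeping showing $\phi$ restricts to $\dyck^\circ_{2n}\to\motz_n$, and the order-convexity of $\dyck_{2n}(\nu)$ guaranteeing that connectedness of the interval transfers to connectedness of the corresponding component --- neither of which changes the substance of the argument.
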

\begin{proof}
  It is clear that a Dyck path avoids $NNN$ if and only if its type avoids $EE$. The first point thus follows from Theorem~\ref{thm:phi-cover} and the fact that $\phi$ is a bijection. For the size of paths, given a Motzkin path $P$ of length $n$ with $k$ steps $N$, $k$ steps $E$ and $\ell$ steps $D$, we have $n = 2k+\ell$. Then, since $\phi^{-1}$ sends $N$ to $NNE$, $E$ to $E$ and $D$ to $NE$, the length of $\phi^{-1}(P)$ is $4k+2\ell = 2n$.

  For the second point, we deduce from Proposition~\ref{prop:phi-type} that Dyck paths of the same type correspond exactly to Motzkin paths of the same class. Since $(\leq_D, \dyck_{2n}(\nu))$ is an interval of the Tamari lattice, it is a connected poset. By Proposition~\ref{prop:comp-type}, paths of different classes are not comparable. We thus conclude that connected components in $(\leq_\motz, \motz_n)$ are in one-to-one correspondence with classes.
\end{proof}

We now know that classes of Motzkin paths characterize connected components in $(\leq_\motz, \motz_n)$. We also note that it was proved in \cite{PRV2014extension} that $(\leq_D, \dyck_{2n}(\nu))$ is isomorphic to the generalized Tamari lattice $\textsc{Tam}(\nu)$ defined therein.

\section{Enumerative aspect}

We now explore enumeration problems for $(\leq_\motz, \motz_n)$ with the structural results in the previous section. We have two major targets: the number of connected components and the number of intervals. The first one is easy.

\begin{prop}
  The number of connected components in $(\leq_\motz, \motz_n)$ is given by the $n$-th Fibonacci number $F_n$, defined by $F_1 = 1, F_2 = 1, F_{n+1} = F_{n} + F_{n-1}$.
\end{prop}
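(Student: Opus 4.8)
The plan is to push the count of connected components through the structural results of Section~3 down to the count of an elementary family of words, and then to recognize that count as a Fibonacci number.

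First I would apply Corollary~\ref{coro:phi-cover}: the connected components of $(\leq_\motz, \motz_n)$ are in bijection with the classes of Motzkin paths realized in $\motz_n$, and, through the bijection $\phi$ together with Proposition~\ref{prop:phi-type}, these are in bijection with the types $\type(R)$ realized among $R \in \dyck^\circ_{2n}$. Since the type of a Dyck path of length $2n$ is a word of length $n-1$ over $\{N,E\}$, and since $R$ avoids $NNN$ exactly when $\type(R)$ avoids $EE$, the number of components equals the number of words of length $n-1$ over $\{N,E\}$ avoiding $EE$ that actually occur as a type in $\dyck^\circ_{2n}$.

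The one step that is not a direct quotation of earlier results — and the step I expect to be the crux — is the surjectivity claim: \emph{every} word $w \in \{N,E\}^{n-1}$ avoiding $EE$ is realized. For this I would argue constructively, reusing the computation in the proof of Proposition~\ref{prop:phi-type}. The word $wN$ has length $n$, avoids $EE$, and ends in $N$, so it factors uniquely as a concatenation of the blocks $N$ and $EN$; recording the block lengths gives a sequence $(a_1,\dots,a_k)$ with $a_i \in \{1,2\}$ and $a_1+\dots+a_k = n$, hence $k \le n$. Then $R := N^{a_1}E^{b_1}N^{a_2}E^{b_2}\cdots N^{a_k}E^{b_k}$ with $b_1 = \dots = b_{k-1} = 1$ and $b_k = n-k+1$ is a genuine Dyck path of length $2n$, because $\sum_{i \le t} a_i \ge t = \sum_{i \le t} b_i$ for $t < k$ with equality at $t = k$; it avoids $NNN$ since each $a_i \le 2$; and by the identity $\nu_t(a_1)\cdots\nu_t(a_k) = \type(R)\,N$ from Proposition~\ref{prop:phi-type}, its type is exactly $w$. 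Thus the components of $(\leq_\motz,\motz_n)$ are in bijection with \emph{all} binary words of length $n-1$ avoiding $EE$, equivalently with the compositions of $n$ into parts $1$ and $2$ (the block lengths $a_i$).

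It then remains to count these. Writing $a_n$ for their number and conditioning on the last part of the composition — a last part equal to $1$ leaves a composition of $n-1$, a last part equal to $2$ leaves a composition of $n-2$ — gives the recurrence $a_n = a_{n-1} + a_{n-2}$ for $n \ge 3$; checking the first values then identifies $(a_n)$ with the Fibonacci numbers, which yields the stated formula. Everything except the constructive surjectivity argument in the previous paragraph is immediate from Corollary~\ref{coro:phi-cover} and this one-line recurrence, so that is where the real work lies.
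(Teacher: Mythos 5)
Your reduction is the same as the paper's: by Corollary~\ref{coro:phi-cover} the components correspond to classes, hence (via Proposition~\ref{prop:phi-type}) to types of $NNN$-avoiding Dyck paths, hence to words of length $n-1$ over $\{N,E\}$ avoiding $EE$. The one thing you add that the paper leaves implicit is the surjectivity argument --- that every $EE$-avoiding word of length $n-1$ is actually realized as a type --- and your explicit construction ($b_1=\dots=b_{k-1}=1$, $b_k=n-k+1$) is correct and worth having.

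However, your last step does not close as written, and the issue is substantive. The number of $EE$-avoiding words of length $n-1$, equivalently of compositions of $n$ into parts $1$ and $2$, is $1, 2, 3, 5, 8, \dots$ for $n = 1, 2, 3, 4, 5, \dots$; with the convention $F_1 = F_2 = 1$ fixed in the statement, this is $F_{n+1}$, not $F_n$. You assert that ``checking the first values then identifies $(a_n)$ with the Fibonacci numbers, which yields the stated formula,'' but the check actually fails at $n=2$: $\motz_2 = \{NE, DD\}$ has two classes (hence two components), while $F_2 = 1$. So either the statement should read $F_{n+1}$ (or adopt the shifted convention $F_0 = F_1 = 1$), or your proof must explain away this discrepancy --- it cannot do both. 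The recurrence $a_n = a_{n-1} + a_{n-2}$ is fine; it is only the alignment of initial conditions with the stated indexing that is wrong, and a proof that claims to verify the initial values should have caught it. (The paper's own one-line proof glosses over the same offset, so you are in good company, but the gap is real.)
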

\begin{proof}
  By Corollary~\ref{coro:phi-cover}, connected components of $(\leq_\motz, \motz_n)$ are in bijection with words of length $n-1$ in $\{N,E\}$ avoiding $EE$, which are counted by Fibonacci numbers.
\end{proof}

It is not difficult to refine this result with respect to the number of diagonal steps.

\begin{prop} \label{prop:conn-motz-refined}
  The number of connected components in $(\leq_\motz, \motz_n)$ with $n-2k$ diagonal steps (thus $k$ north steps and $k$ east steps) in its elements is $\binom{n-k}{k}$.
\end{prop}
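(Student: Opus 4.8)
The plan is to read the connected components directly off Corollary~\ref{coro:phi-cover}: it identifies the components of $(\leq_\motz,\motz_n)$ with the classes of Motzkin paths in $\motz_n$, and, via $\phi$ together with Proposition~\ref{prop:phi-type}, with the types of the Dyck paths in $\dyck^\circ_{2n}$, that is, with the words of length $n-1$ over $\{N,E\}$ avoiding $EE$. The remaining work is to single out, among these words, those arising from Motzkin paths with exactly $n-2k$ diagonal steps, and to count them.

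To identify the right subset of words, I would follow the relevant statistic through the chain of bijections, reusing the block decomposition from the proof of Proposition~\ref{prop:phi-type}. Write a Dyck path $R\in\dyck^\circ_{2n}$ as $R = N^{a_1}E^{b_1}\cdots N^{a_m}E^{b_m}$ with $a_j\in\{1,2\}$ and $b_j>0$; recall that $\type(R)\cdot N=\nu_t(a_1)\cdots\nu_t(a_m)$ with $\nu_t(1)=N$, $\nu_t(2)=EN$, while $\phi(R)$ acquires a diagonal step from each index $j$ with $a_j=1$ and a north step from each index $j$ with $a_j=2$. Hence a Motzkin path $\phi(R)\in\motz_n$ has $n-2k$ diagonal steps — equivalently $k$ north steps and $k$ east steps — precisely when exactly $k$ of the $a_j$ equal $2$, which, since each such index contributes a single letter $E$ to $\type(R)\cdot N$ and the indices with $a_j=1$ contribute none, happens precisely when $\type(R)$ contains exactly $k$ letters $E$. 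Therefore the components whose elements have $n-2k$ diagonal steps correspond bijectively to the words of length $n-1$ over $\{N,E\}$ that avoid $EE$ and contain exactly $k$ copies of $E$.

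The last step is the only genuine computation, and it is routine: a word of this form is obtained by lining up the $n-1-k$ copies of $N$, which create $n-k$ gaps (counting the two extremal ones), and selecting $k$ of these gaps into which to insert one $E$ each, at most one per gap so as to avoid $EE$. This yields $\binom{n-k}{k}$ such words, a quantity that vanishes once $k>n/2$, matching the fact that then no path in $\motz_n$ can have $n-2k$ diagonal steps.

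I do not anticipate a real obstacle. The substance is the bookkeeping that matches ``$n-2k$ diagonal steps in the Motzkin path'' with ``$k$ letters $E$ in its type'', and the only points requiring a little care are keeping the number of blocks $m$ separate from the parameter $k$ and checking the boundary values of $k$.
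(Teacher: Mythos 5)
Your proof is correct and follows the same route as the paper's: invoke Corollary~\ref{coro:phi-cover} to identify the components with types, i.e.\ words of length $n-1$ over $\{N,E\}$ avoiding $EE$, observe that $n-2k$ diagonal steps corresponds to exactly $k$ occurrences of $E$, and count such words as $\binom{n-k}{k}$. The paper states this in two sentences; you have merely filled in the bookkeeping (the block decomposition tracking $a_j=1$ versus $a_j=2$ and the gap-insertion count), all of which is accurate.
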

\begin{proof}
  By Corollary~\ref{coro:phi-cover} and the definition of $\phi$, the connected components to be counted are in bijection with words of length $n-1$ in $\{N,E\}$ avoiding $EE$ with $k$ occurrences of $E$. This number is given by $\binom{n-k}{k}$.
\end{proof}

To count intervals in $(\leq_\motz, \motz_n)$, by Corollary~\ref{coro:phi-cover}, we only need to count synchronized intervals avoiding $NNN$. We resort to the following known decomposition of synchronized intervals in \cite{nonsep}, with the reformulation in \cite{trinity}. A \tdef{properly pointed synchronized interval}, denoted by $[R^\ell \cdot R^r, S]$, is a synchronized interval with a split $R^\ell \cdot R^r$ in its lower path such that both $R^\ell$ and $R^r$ are Dyck paths, and $R^\ell$ is non-empty. We recall that the empty path is denoted by $\epsilon$.

\begin{prop}[Proposition~3.1 in \cite{trinity}] \label{prop:sync-decomp}
  Let $[R,S]$ be a synchronized interval in $(\leq_D, \dyck_{2n})$. The two Dyck paths $R$ and $S$ are uniquely decomposed as follows:
  \[
  R = N \cdot R_1^\ell \cdot E \cdot R_1^r \cdot R_2, \quad S = N \cdot S_1 \cdot E \cdot S_2.
  \]
  Here, the sub-paths $R_1^\ell, R_1^r, R_2, S_1, S_2$ satisfy
  \begin{itemize}
  \item Each sub-path is either empty or a Dyck path;
  \item $R_1^\ell = \epsilon$ if and only if $S_1 = \epsilon$, and in that case we also have $R_1^r = \epsilon$;
  \item $R_2 = \epsilon$ if and only if $S_2=\epsilon$;
  \item When not empty, $[R_1^\ell \cdot R_1^r, S_1]$ is a properly pointed synchronized interval, and $[R_2, S_2]$ is a synchronized interval.
  \end{itemize}
\end{prop}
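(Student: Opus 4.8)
The plan is to prove the decomposition by a direct structural analysis of the lower path $R$ and the upper path $S$ of a synchronized interval, using the primitive-step reformulation of the Tamari covering relation together with the known fact that the Tamari lattice restricted to Dyck paths of a fixed type is an interval. First I would write $R$ in terms of its first return to the diagonal: since $R$ is a nonempty Dyck path, $R = N \cdot R' \cdot E \cdot R_2$ where $N \cdot R' \cdot E$ is the first primitive factor and $R_2$ is the (possibly empty) remainder. The goal is then to split $R'$ itself as $R_1^\ell \cdot R_1^r$ and to produce the matching decomposition $S = N \cdot S_1 \cdot E \cdot S_2$ of the upper path, with $S_1$ covering $R_1^\ell \cdot R_1^r$ in the properly-pointed sense and $S_2$ covering $R_2$.

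The key observation is that $\type(R) = \type(S)$ forces the first returns of $R$ and $S$ to interact in a controlled way. Concretely, the first north step $N_1$ of $R$ is matched, in the type word, with the first north step of $S$; I would argue that because the type is preserved along any chain of covers from $R$ to $S$, and because a cover only swaps a primitive factor past a preceding $E$ without altering which $N$'s are followed by $E$'s, the position in the type word of the ``first return'' is an invariant. This gives that $S$ also begins with a single $N$ whose matching $E$ closes a primitive factor, i.e. $S = N \cdot S' \cdot E \cdot S_2$, and moreover $R_2 = \epsilon \iff S_2 = \epsilon$ (the latter corresponds to $R$, hence $S$, being primitive). Then $[R', S']$ and $[R_2, S_2]$ are each synchronized intervals in their own right — comparability is inherited because covers respect the first-return decomposition once the type is fixed, and equality of types on each factor follows from equality of types on the whole. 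This already yields the last bullet except for the ``properly pointed'' refinement and the split of $R'$.

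For the split $R_1^\ell \cdot R_1^r$ of $R'$: here I would use that inside the interval $[R', S']$, moving from $R'$ up to $S'$ one is only allowed to push primitive factors leftward past $E$'s, but the leading context is now an $N$ rather than an $E$, so the \emph{first} primitive factor of $R'$ can never be moved past that $N$. This distinguishes a canonical prefix: $R_1^\ell$ is the first primitive factor of $R'$ (together with anything forced to stay left of the first movable boundary), and $R_1^r$ is the rest; the pointed interval $[R_1^\ell \cdot R_1^r, S_1]$ records exactly this frozen split, with $R_1^\ell$ nonempty precisely when $R'$ (equivalently $S_1$) is nonempty. The conditions $R_1^\ell = \epsilon \iff S_1 = \epsilon$ and ``$R_1^\ell = \epsilon$ implies $R_1^r = \epsilon$'' are then immediate, since $R_1^\ell = \epsilon$ means $R' = \epsilon$. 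Uniqueness of the whole decomposition follows because each of the pieces $R_1^\ell, R_1^r, R_2, S_1, S_2$ is read off canonically from the first-return structures of $R$ and $S$.

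The main obstacle I anticipate is the bookkeeping around why the first-return position of the upper path $S$ coincides with that of the lower path $R$ — that is, proving rigorously that synchronization (equality of types) plus comparability forces $S$ to have the same ``outer'' primitive decomposition as $R$. Once that structural rigidity is established, splitting off $R_2$/$S_2$ and then further isolating the frozen prefix $R_1^\ell$ inside the primitive part is routine. I would handle the rigidity either by a careful induction on the length of a saturated chain from $R$ to $S$ (showing each single cover preserves the outer decomposition, as in Theorem~\ref{thm:phi-cover}'s argument that a cover with $\type(R)=\type(S)$ must swap past an $E$), or by appealing directly to the characterization of $(\leq_D, \dyck_{2n}(\nu))$ as an interval and tracking the minimal and maximal elements of type $\nu$ restricted to the primitive part.
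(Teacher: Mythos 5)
First, a point of reference: the paper does not prove this proposition at all --- it is quoted from Proposition~3.1 of \cite{trinity} --- so your attempt can only be judged on its own merits. Judged that way, it has a genuine gap at its central step. You anchor the decomposition on the first return of the \emph{lower} path, writing $R = N\cdot R'\cdot E\cdot R_2$ with $N\cdot R'\cdot E$ the first primitive factor of $R$, and you then claim that synchronization forces $S$ to first return ``at the same place'', so that $S = N\cdot S'\cdot E\cdot S_2$ with $R_2=\epsilon$ if and only if $S_2=\epsilon$. That rigidity claim is false. Take $R = NNEENE$ and $S = NNENEE$: both have type $EN$, and $R \covered_D S$ (the valley after $NNEE$ lets you swap the primitive factor $NE$ past the preceding east step), so $[R,S]$ is a synchronized interval; yet $R$ first returns to the diagonal after four steps while $S$ is primitive. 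Under your scheme $R_2 = NE \neq \epsilon$ while $S_2 = \epsilon$, so your third bullet already fails, and there is no way to split your $R' = NE$ into $R_1^\ell\cdot R_1^r$ matching $S_1 = NENE$. A type-preserving cover does preserve which north steps are followed by east steps, but it can move the first return of the path; these are different invariants, and your argument conflates them.

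The statement itself signals the correct anchoring, which your reading inverts: in $R = N\cdot R_1^\ell\cdot E\cdot R_1^r\cdot R_2$ the separating $E$ sits \emph{between} $R_1^\ell$ and $R_1^r$, so $N\cdot R_1^\ell\cdot E$ is the first primitive factor of $R$ and $R_1^r$ lies \emph{after} $R$'s first return, whereas $S = N\cdot S_1\cdot E\cdot S_2$ is the first-return factorization of the \emph{upper} path. The entire purpose of the ``properly pointed'' refinement is to record where $R$ first returns inside the block of $R$ that corresponds, by counting north steps, to $S_1$; in the example above one gets $R_1^\ell = NE$, $R_1^r = NE$, $R_2 = S_2 = \epsilon$, $S_1 = NENE$, and the pointed interval $[NE\cdot NE,\, NENE]$. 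A correct proof must first show that if $S = N\cdot S_1\cdot E\cdot S_2$ is the first return of $S$, then the lower path splits as $R = R_1\cdot R_2$ at the position dictated by $S$ (same number of north steps in $R_1$ as in $N\cdot S_1\cdot E$), with $[R_1, N\cdot S_1\cdot E]$ and $[R_2, S_2]$ synchronized intervals --- this requires the global characterization of $\leq_D$, not just the observation that single covers slide primitive factors --- and only afterwards read off $R_1 = N\cdot R_1^\ell\cdot E\cdot R_1^r$ from the first return of $R_1$. Your proposal never establishes that splitting, which is the actual content of the proposition.
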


We then have the following refined decomposition on synchronized intervals avoiding $NNN$.

\begin{prop} \label{prop:decomp-restrict}
  Let $[R,S]$ be a synchronized interval, with $S = N \cdot S_1 \cdot E \cdot S_2$ in the decomposition of Proposition~\ref{prop:sync-decomp}. Then, $S$ avoids $NNN$ if and only if both $S_1$ and $S_2$ are either empty or avoiding $NNN$, and if $S_1$ is not empty, then $S_1$ starts with $NE$.
\end{prop}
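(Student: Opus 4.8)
The plan is to work directly with the decomposition $S = N \cdot S_1 \cdot E \cdot S_2$ from Proposition~\ref{prop:sync-decomp} and to locate exactly where a forbidden factor $NNN$ in $S$ can occur, given the positions of the distinguished initial $N$ and the $E$ that closes the primitive block $N \cdot S_1 \cdot E$. The occurrences of $NNN$ in a word split into two types: those that lie entirely within $S_1$ or entirely within $S_2$, and those that straddle one of the two ``seams'' — the seam between the leading $N$ and $S_1$, and the seam between $E$ and $S_2$. The seam before $S_2$ is harmless: it is preceded by an $E$, so no $NNN$ can straddle it, and hence an occurrence of $NNN$ touching $S_2$ must be internal to $S_2$. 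The seam after the leading $N$ is the only delicate point: a run of north steps there consists of that single leading $N$ followed by the initial run of north steps of $S_1$, so $S$ contains $NNN$ through this seam precisely when $S_1$ begins with at least two north steps, i.e. with $NN$.

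First I would record the easy direction. If $S_1$ and $S_2$ each avoid $NNN$ (or are empty) and, when nonempty, $S_1$ begins with $NE$, then: no internal occurrence exists by hypothesis; no occurrence straddles the $E\,S_2$ seam since it is preceded by $E$; and at the leading seam the maximal north run is $N$ (the leading step) followed by the first step of $S_1$, which is an $N$, then the second step of $S_1$, which is $E$ — so the run has length $2$, not $3$. (If $S_1=\epsilon$, the leading $N$ is immediately followed by the $E$ that closes the block, so the run has length $1$.) Hence $S$ avoids $NNN$.

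For the converse, suppose $S$ avoids $NNN$. Then every factor of $S$ avoids $NNN$, so in particular $S_1$ and $S_2$ avoid $NNN$; this uses only that $S_1$ and $S_2$ are factors of $S$. It remains to show that a nonempty $S_1$ begins with $NE$. Since $S_1$ is a nonempty Dyck path it begins with $N$; if it began with $NN$, then together with the leading step of $S$ we would have the factor $N\cdot NN = NNN$ occurring in $S$ at position $1$, a contradiction. Hence $S_1$ begins with $N$ followed by a step that is not $N$, i.e. by $E$, so $S_1$ starts with $NE$.

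I do not expect a genuine obstacle here; the only point requiring care is making the ``seam'' bookkeeping precise — that the relevant maximal north run of $S$ at the start really is the concatenation of the single leading $N$ with the initial north run of $S_1$, and that the run at the $E\,S_2$ boundary starts fresh because it is immediately preceded by an $E$. Both follow at once from the fact that $S_1$ is a Dyck path (hence either empty or starting with $N$ and ending with $E$) and that the block $N \cdot S_1 \cdot E$ is explicitly terminated by $E$. Once this is spelled out, the equivalence is immediate in both directions.
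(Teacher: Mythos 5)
Your proof is correct and follows essentially the same route as the paper's: classify occurrences of $NNN$ as internal to $S_1$ or $S_2$ or straddling a seam, note that the $E$ steps block the other seams, and observe that the only dangerous seam is the one after the leading $N$, which produces an $NNN$ exactly when $S_1$ begins with $NN$. The paper's own proof is a two-sentence version of the same argument; your version just spells out the seam bookkeeping more explicitly.
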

\begin{proof}
  Since $S_1$ and $S_2$ are separated by an east step $E$ in $S$, a pattern $NNN$ in $S$ occurs either in $S_1$, or in $S_2$, or at the beginning of $S$ when $S_1$ starts with $NN$. We thus have the equivalence. 
\end{proof}

We now use generating functions to enumerate synchronized intervals avoiding $EE$, following \cite{nonsep}. Given a Dyck path $R$, a \tdef{contact} is an intersection of $R$ with the main diagonal $x=y$. We denote by $\cont(R)$ the number of contacts of $R$. Since ultimately we want to count intervals of Motzkin paths, we will also track another statistic. Given a Dyck path $S$ avoiding $NNN$, we denote by $\ds(S)$ the number of north steps in $S$ that is neither followed nor preceded by another north steps. In other words, for $S$ written as $N^{a_1} E^{b_1} \cdots N^{a_k} E^{b_k}$ with $a_i \in \{1,2\}$ and $b_i>0$, the value of $\ds(S)$ is the number of $a_i$'s of value $1$. We have the following properties of $\ds$.

\begin{prop} \label{prop:ds}
  \begin{enumerate}
  \item Given a Dyck path $R$ avoiding $NNN$, its corresponding Motzkin path $\phi(R)$ has $\ds(R)$ diagonal steps.
  \item Given a synchronized interval $[R, S]$ avoiding $NNN$, we have $\ds(R) = \ds(S)$.
  \end{enumerate}
\end{prop}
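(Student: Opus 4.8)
For the first part, the plan is to recall that $\phi$ replaces each maximal block $N^{a_i}E^{b_i}$ of $R$ by $\xi(a_i)E^{b_i-1}$, where $\xi(1)=D$ and $\xi(2)=N$. Hence a diagonal step is produced in $\phi(R)$ exactly when the corresponding $a_i$ equals $1$, i.e.\ exactly when the $i$-th block consists of a single north step that is neither preceded nor followed by another north step (the preceding and following steps, when they exist, being east steps). By the definition of $\ds$, the number of such blocks is precisely $\ds(R)$, so $\phi(R)$ has $\ds(R)$ diagonal steps. I would phrase this directly in terms of the decomposition $R = N^{a_1}E^{b_1}\cdots N^{a_k}E^{b_k}$ already used repeatedly above, so the argument is essentially a restatement of the construction of $\phi$ together with the definition of $\ds$; there is no real obstacle here.

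For the second part, I would combine the first part with Proposition~\ref{prop:phi-type} and Theorem~\ref{thm:phi-cover}. Let $[R,S]$ be a synchronized interval avoiding $NNN$. By Theorem~\ref{thm:phi-cover}, $P=\phi(R)$ and $Q=\phi(S)$ satisfy $P\leq_\motz Q$; in particular they lie in the same connected component, so by Corollary~\ref{coro:phi-cover} (or directly by Proposition~\ref{prop:comp-type}) we have $\cls(P)=\cls(Q)$. Since the number of diagonal steps of a Motzkin path is the length of its class, $|P|_D = |Q|_D$. By part~(1), $|P|_D = \ds(R)$ and $|Q|_D = \ds(S)$, whence $\ds(R)=\ds(S)$. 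Alternatively, and perhaps more cleanly, I could invoke Proposition~\ref{prop:phi-type}: since $\type(R)=\type(S)$, we get $\cls(\phi(R))=\cls(\phi(S))$, and again taking lengths gives $\ds(R)=\ds(S)$ via part~(1).

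The only mild subtlety is making sure that ``the length of the class equals the number of diagonal steps'' is invoked correctly — but this is immediate from the definition of $\cls$, which lists the heights of all diagonal steps. So I expect no genuine obstacle in either part; the proof is a short assembly of the construction of $\phi$, the definition of $\ds$, and the already-established Proposition~\ref{prop:phi-type} (equivalently Theorem~\ref{thm:phi-cover} plus Proposition~\ref{prop:comp-type}).
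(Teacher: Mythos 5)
Your proposal is correct and matches the paper's proof, which simply notes that part (1) follows from the definitions of $\phi$ and $\ds$, and that part (2) is a direct consequence of Proposition~\ref{prop:phi-type}; your preferred second route (via $\type(R)=\type(S)$ implying $\cls(\phi(R))=\cls(\phi(S))$ and then taking lengths) is exactly the intended argument. The alternative detour through Theorem~\ref{thm:phi-cover} also works but is unnecessary.
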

\begin{proof}
  The first point comes from the definitions of $\phi$ and $\ds$. The second one is a direct consequence of Proposition~\ref{prop:phi-type}.
\end{proof}

We define the following generating function for intervals in $(\leq_D, \dyck^\circ_{2n})$ for all $n$:
\[
F_\circ(t,u,x) = \sum_{n > 0} t^n \sum_{R, S \in \dyck^\circ_{2n}, R \leq_D S} u^{\ds(S)} x^{\cont(R) - 1}.
\]

We then have the following functional equation for $F_\circ$.

\begin{prop} \label{prop:fn-eq}
  The generating function $F_\circ$ satisfies the following equation:
  \begin{align*}
  F_\circ(t,u,x) &= tux + tuxF_\circ(t,u,x) \\
  &+ t^2 x \frac{x(1+F_\circ(t,u,x)) - 1 - F_\circ(t,u,1)}{x-1} (1 + F_\circ(t,u,x)).
  \end{align*}
\end{prop}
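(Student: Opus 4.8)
The plan is to translate the recursive decomposition of synchronized intervals avoiding $NNN$ (Proposition~\ref{prop:sync-decomp} refined by Proposition~\ref{prop:decomp-restrict}) directly into a functional equation, tracking the three statistics encoded by $t$ (half-length), $u$ ($\ds$ of the upper path, equivalently the number of diagonal steps after $\phi$) and $x$ (number of contacts of the lower path minus one). Write $F_\circ = F_\circ(t,u,x)$ and $F_\circ(1) = F_\circ(t,u,1)$ for short. A nonempty interval $[R,S]$ in $\dyck^\circ$ decomposes as $R = N\cdot R_1^\ell\cdot E\cdot R_1^r\cdot R_2$ and $S = N\cdot S_1\cdot E\cdot S_2$, and I would split the count according to whether the ``first block'' data $(R_1^\ell, R_1^r, S_1)$ is empty or not, and whether the ``rest'' $(R_2,S_2)$ is empty or not.

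First I would handle the degenerate cases. If $S_1 = \epsilon$ (equivalently $R_1^\ell = R_1^r = \epsilon$), the interval is $[N\cdot R_2\cdot E, N\cdot S_2\cdot E]$; the outer $NE$ contributes one $N$ that is both preceded and followed by nothing problematic — in fact by Proposition~\ref{prop:decomp-restrict} we need $S_2$ empty or avoiding $NNN$, and the leading $N$ of $S$ sits before the $E$, so it counts toward $\ds$, giving a factor $tu$; wait — more carefully, the leading $N$ is isolated iff $S_2$ does not begin with $N$, so this case needs a sub-split. It is cleaner to observe that $\ds(S)=\ds(\phi^{-1}\text{-preimage count})$ behaves additively once the outer step structure is fixed, and to set up the block $N\cdot S_1\cdot E$ as contributing its own $\ds$. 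When the first block is empty, the path is $N\cdot E\cdot S_2$ with $S_2$ empty or in $\dyck^\circ$: this yields the terms $tux$ (the interval $[NE,NE]$, one contact beyond the trivial, $x^{1}$ — actually $\cont(NE)-1 = 1$, hmm, so $x^1$, matching $tux\cdot x^0$? I must recheck the contact bookkeeping: $\cont(R)-1$ counts contacts strictly between, which is $1$ for $NE$... the factor $x$ is absorbed because after the closing $E$ of the outer primitive block we are at a contact, and $R_2$ then contributes $x^{\cont(R_2)}$ rather than $x^{\cont(R_2)-1}$). So the first row $tux + tux F_\circ$ comes from: outer primitive block $N\cdot E$ with $S_1=\epsilon$, the leading $N$ isolated (hence $u$), one new contact before $R_2$ (hence the $x$), and $R_2$ either empty ($tux$) or a nonempty interval in $\dyck^\circ$ contributing $F_\circ$ ($tux F_\circ$).

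The substantial term is the second row, covering the case $S_1 \neq \epsilon$. Then $[R_1^\ell\cdot R_1^r, S_1]$ is a properly pointed synchronized interval avoiding $NNN$, with $S_1$ forced to start with $NE$ (Proposition~\ref{prop:decomp-restrict}), which makes the leading $N$ of the whole path $S$ isolated — contributing a $u$ — and also pins down the first diagonal-step structure. The generating function for properly pointed synchronized intervals avoiding $NNN$, tracked by $x$ at the pointing location, is the standard ``$x\,\partial$''-type transform of $F_\circ$: pointing at a contact of $R_1 = R_1^\ell\cdot R_1^r$ turns $\sum x^{\cont-1}$ into $\sum (x^{\cont-1}+x^{\cont-2}+\cdots+x^0) = \frac{x(1+F_\circ) - 1 - F_\circ(1)}{x-1}$ after accounting for the nonempty constraint; this is exactly the fraction appearing in the statement. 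Then $R_2$ (equivalently $S_2$, empty together) contributes the factor $(1+F_\circ)$, the outer primitive block and the leading $N$/$E$ contribute $t^2$, and the remaining $x$ and $u$ are the contact created after closing the block and the isolated leading north step. Assembling: $t^2 x\cdot u\cdot(\text{pointed interval GF})\cdot(1+F_\circ)$ — but the statement has no $u$ in the second row, so the $u$ must already be inside the pointed-interval generating function or cancel against the fact that $S_1$ beginning with $NE$ contributes one diagonal step whose $u$ is booked in $\frac{x(1+F_\circ)-1-F_\circ(1)}{x-1}$ via the recursion; I would reconcile this by checking that the pointed-synchronized-interval series, defined with the same $u^{\ds(S)}$ weight, equals precisely that fraction — this is where Proposition~\ref{prop:ds}(2) ($\ds(R)=\ds(S)$) is essential, so that the weight is well-defined on the interval rather than on a single path.

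\textbf{Main obstacle.} The delicate point is the contact bookkeeping in the catalytic variable $x$: correctly arguing that concatenation $R_1^r\cdot R_2$ inside the lower path adds contact counts (so $x$-exponents add with the expected off-by-one shifts), and that pointing a synchronized interval at an arbitrary internal contact of its lower path produces the geometric-sum transform $\frac{x(1+F_\circ(x)) - 1 - F_\circ(1)}{x-1}$ — including verifying that this rational expression is actually a polynomial in $x$ (the numerator vanishes at $x=1$) and that the ``properly pointed, $R_1^\ell$ nonempty'' constraint is exactly what removes the $x^0$-only/empty term. I would treat this by a short lemma: the generating function for synchronized intervals avoiding $NNN$ with a distinguished contact on the lower path, other than the last, weighted by $x^{(\#\text{contacts before the mark})}$, equals the displayed fraction; this follows from $\sum_{j\ge 0} x^j(\text{something}) $ telescoping, using that $1+F_\circ(t,u,x) = \sum_{R\le S} t^{|R|/2} u^{\ds S} x^{\cont R - 1}$ with the empty interval included. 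Once that lemma is in hand, the rest is a direct transcription of Propositions~\ref{prop:sync-decomp} and~\ref{prop:decomp-restrict}, and the claimed equation for $F_\circ$ follows by summing the two rows.
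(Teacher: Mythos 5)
Your overall strategy is the paper's own: translate the decomposition of Propositions~\ref{prop:sync-decomp} and~\ref{prop:decomp-restrict} into a functional equation, with the fraction $\frac{x(1+F_\circ)-1-F_\circ(t,u,1)}{x-1}$ arising from the geometric-sum transform that accounts for the pointing of $R_1=R_1^\ell\cdot R_1^r$ at a non-initial contact. The contact bookkeeping in $x$, which you flag as the main obstacle, is indeed handled exactly as you describe and is fine. The first row is also fine, although your hesitation there is unfounded: in $S=N\cdot E\cdot S_2$ the leading $N$ is immediately followed by $E$, so it is isolated regardless of whether $S_2$ begins with $N$, and the factor $u$ is unconditional.

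The genuine gap is in the $u$-bookkeeping of the main term, and your proposed reconciliation goes in the wrong direction. When $S_1\neq\epsilon$, Proposition~\ref{prop:decomp-restrict} forces $S_1$ to start with $NE$, so $S=N\cdot S_1\cdot E\cdot S_2$ starts with $NNE$: the leading $N$ of $S$ is \emph{followed by another north step}, hence is \emph{not} isolated and contributes no $u$ — the opposite of what you assert. Worse, the first $N$ of $S_1$, which is isolated inside $S_1$ and therefore contributes a factor $u$ to the recursive count $tux(1+F_\circ)$ of synchronized intervals starting with $NE$, \emph{loses} its isolation once embedded in $S$ (it becomes preceded by the outer $N$), so that $u$ must be divided out. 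This is precisely why the second row carries no $u$: the correct contribution of the block is $tx(1+F_\circ)$, not $tux(1+F_\circ)$, and after pointing one gets $x\,\frac{tx(1+F_\circ)-t(1+F_1)}{x-1}$, which combined with the outer $t$ and the factor $1+F_\circ$ for $[R_2,S_2]$ yields the stated term. Your suggested check — that the properly pointed series with the unmodified weight $u^{\ds(S_1)}$ equals the displayed fraction — is false (that series equals $u$ times it), so the ``extra $u$'' you notice does not cancel on its own; it must be removed by the explicit argument above that $\ds$ is not additive across this decomposition. Until that step is supplied, the equation you would derive differs from the stated one by a factor of $u$ (indeed $u^2$, counting your spurious $u$ from the outer $N$) in the last term.
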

\begin{proof}
  Each term corresponds to a case in the decomposition of a synchronized interval $[R,S]$ avoiding $NNN$, as in Proposition~\ref{prop:sync-decomp}, with restrictions in Proposition~\ref{prop:decomp-restrict}. The first term corresponds to the case $S_1 = S_2 = \epsilon$, where $R = S = NE$. The second term corresponds to the case $S_1 = \epsilon$ but $S_2 \neq \epsilon$, where $R = NE \cdot R_2$, adding one contact and increasing $\ds$ by $1$. The third term corresponds to the case $S_1 \neq \epsilon$, which is more complicated.

  First we observe that, if $S_1$ starts with $NE$, then the (not yet pointed) synchronized interval $[R_1, S_1]$ with $R_1 = R_1^\ell \cdot R_1^r$ can be written as $[NE \cdot R_1', NE \cdot S_1']$, with $[R_1', S_1']$ a synchronized interval avoiding $NNN$. The generating function of synchronized intervals avoiding $NNN$ that start with $NE$ is thus $tux(1+F_\circ(t,u,x))$. Then we notice that the extra $u$, contributed by $NE$, will not stand in the final interval, since the $NE$ at the beginning of $S_1$ becomes $NNE$ at the beginning of $S$, no longer contributing to $\ds$. Therefore, the final contribution should be $tx(1+F_\circ(t,u,x))$. Now, we see that $[R_1, S_1]$ gives exactly $\cont(R_1) - 1$ properly pointed synchronized intervals $[R_1^\ell \cdot R_1^r, S]$ with $\cont(R_1^\ell)$ ranging from $2$ to $\cont(R_1)$. This is because we can break $R_1$ at any of its contacts, except the first one, to give a properly pointed variant. In terms of generating function, the contribution $x^k t^n$ of $[R_1,S_1]$ turns into $t^n (x + x^2 + \ldots + x^k) = t^{n} x \frac{x^k-1}{x-1}$ for its properly pointed variants. Therefore, the contribution of $[R_1^\ell \cdot R_1^r, S_1]$ is
  \[
  x \frac{tx(1+F_\circ(t,u,x)) - t(1+F_\circ(t,u,1))}{x-1}.
  \]
  The contribution from $[R_2, S_2]$ is simply $1 + F_\circ$, and we also have two extra steps. We thus conclude this case.
\end{proof}

The equation in Proposition~\ref{prop:fn-eq} can be rearranged into:
\begin{equation} \label{eq:fn-eq}
  F_\circ(t,u,x) = tx(1 + F_\circ(t,u,x)) \left(u + t + t \cdot \frac{xF_\circ(t,u,x) - F_\circ(t,u,1)}{x-1} \right).
\end{equation}

Maybe not much of a surprise, (\ref{eq:fn-eq}) is very close to the functional equation of synchronized intervals in \cite{nonsep}. In particular, it is also a functional equation with one catalytic variable, in the scope of \cite{BMJ}. We thus know immediately from \cite{BMJ} without solving the equation that the generating function $F_\circ(t,u,x)$ is algebraic in its variables. We now solve (\ref{eq:fn-eq}) with the method in \cite{BMJ}. To simplify the notations, we denote $F_\circ \equiv F_\circ(t,u,x)$ and $F_1 \equiv F_1(t,u) \equiv F_\circ(t,u,1)$. We only need $F_1$ to be able to count intervals in $(\leq_\dyck, \dyck^\circ_{2n})$, which correspond to intervals in $(\leq_\motz, \motz_n)$. According to Proposition~\ref{prop:ds}, the variable $u$ counts the number of diagonal steps in elements of an interval in $(\leq_\motz, \motz_n)$.

\begin{thm} \label{thm:motz-int-cnt}
  The generating function $F_1(t,u)$ of intervals in $(\leq_\motz, \motz_n)$ is algebraic. More precisely, let $X$ be the formal power series in $t$ with coefficients polynomial in $u$ that satisfies the equation
\begin{equation} \label{eq:X}
  u^2 t^2 X^5 - t^2 (1 + u^2) X^4 - 2 u t X^3 + 2 u t X^2 + X - 1 = 0.
\end{equation}
Then the series $F_1$ can be expressed in terms of $X$ as
\begin{equation} \label{eq:f1}
  F_1(t,u) = \frac{u^2 t^2 X^4 - t (u^2 t + u t^2 + u + 2t) X^3 + (1 + u t + t^2) X - 1}{t^2 X (u t X^2 - 1)}.
\end{equation}
\end{thm}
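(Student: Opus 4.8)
The plan is to apply the kernel method (a.k.a. the quadratic method / Bousquet-Mélou–Jehanne machinery) directly to equation~(\ref{eq:fn-eq}), treating $x$ as the catalytic variable. First I would rewrite (\ref{eq:fn-eq}) in polynomial form: clearing the denominator $x-1$ gives a relation
\[
P\bigl(F_\circ, F_1, t, u, x\bigr) = 0,
\]
polynomial in all arguments, of the shape
\[
(x-1)F_\circ - tx(1+F_\circ)\bigl((u+t)(x-1) + t(xF_\circ - F_1)\bigr) = 0 .
\]
This is exactly of the form $F_\circ = \mathrm{Pol}(F_\circ, F_1, t, u, x)$ with a single catalytic variable $x$, so the general theorem of~\cite{BMJ} applies and guarantees that $F_1$ (indeed $F_\circ$) is algebraic; this gives the qualitative part of the statement for free.

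For the explicit formulae, I would follow the standard recipe. Write $P = P(F_\circ, F_1, x)$ (suppressing $t,u$) and consider it as an equation in $F_\circ$ with $x$ free. Differentiate the identity $P(F_\circ(x), F_1, x) = 0$ with respect to $x$, obtaining $\partial_{F_\circ}P \cdot F_\circ'(x) + \partial_x P = 0$. At a value $x = X$ where $\partial_{F_\circ} P = 0$ (the catalytic point), one simultaneously gets $\partial_x P = 0$, together with the original $P = 0$. This yields three polynomial equations in the three unknowns $F_\circ(X)$, $F_1$, $X$ (with $t,u$ as parameters). I would eliminate $F_\circ(X)$ from these three equations — most cleanly by first solving $\partial_{F_\circ} P = 0$ for $F_\circ(X)$, which is linear in $F_\circ$ here since $P$ is quadratic in $F_\circ$, substituting into $P = 0$ and into $\partial_x P = 0$, and then eliminating $F_1$ between the two resulting relations (for instance by taking a resultant, or by solving one of them linearly for $F_1$ and plugging into the other). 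The outcome is a single polynomial equation satisfied by $X$ alone, which I would then check coincides with~(\ref{eq:X}); back-substitution into the linear-in-$F_1$ relation then produces the closed form~(\ref{eq:f1}). One also has to argue that $X$ is the unique formal power series solution of (\ref{eq:X}) with $X = 1 + O(t)$ (checking the constant term $X(0,u)=1$ directly from (\ref{eq:X}), and that the coefficient of $X-1$ is a unit, so the implicit function theorem pins down $X$ as a power series in $t$ with polynomial-in-$u$ coefficients), which legitimizes the substitution.

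The main obstacle I anticipate is purely computational bookkeeping rather than conceptual: carrying out the two successive eliminations (of $F_\circ(X)$, then of $F_1$) produces large polynomials, and it takes care to arrive at the stated compact forms~(\ref{eq:X}) and~(\ref{eq:f1}) — in particular identifying and cancelling the correct spurious factors introduced by the resultant/elimination steps (the genuine branch must be selected by the power-series condition). A secondary subtlety is the degenerate specialization at $x=1$: since $F_1 = F_\circ(t,u,1)$ sits inside the equation, one must make sure the limit $x\to 1$ of $(xF_\circ - F_1)/(x-1)$ is handled correctly, i.e. that it equals $\partial_x(xF_\circ)|_{x=1} = F_1 + \partial_x F_\circ|_{x=1}$, which is automatically consistent with the polynomial form above but worth a remark. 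Given these, the verification of (\ref{eq:X}) and (\ref{eq:f1}) is then a finite check that can be confirmed by a computer algebra system, and I would present it as such rather than reproducing every intermediate expansion.
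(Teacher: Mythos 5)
Your proposal is correct and follows essentially the same route as the paper: both clear denominators to get a polynomial relation $P(F_\circ,F_1,t,u,x)=0$ quadratic in $F_\circ$, differentiate with respect to the catalytic variable $x$, pick the series $X$ annihilating $\partial P/\partial F_\circ$ to obtain a system of three polynomial equations in $F_\circ(X)$, $X$, $F_1$, eliminate, and justify uniqueness of $X$ via $X=1+O(t)$. The only differences are presentational (your explicit elimination order and the remark about the $x\to 1$ limit), so there is nothing substantive to add.
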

\begin{proof}
A rearrangement of (\ref{eq:fn-eq}) gives
\begin{align}
  \begin{split}\label{eq:sys-1}
    t^2 x^2 F_\circ^2 + (2 t^2 x^2 + x^2 u t - x u t &- t^2 x - x + 1 - t^2 x F_1) F_\circ \\ &+ t^2 x^2 - t^2 x + x^2 u t - x u t - t^2 x F_1 = 0.
  \end{split}
\end{align}
We notice that $F_1$ does not depend on $x$. We now regard the left-hand side of (\ref{eq:sys-1}) as a polynomial $P(F_\circ, F_1, t, u, x)$. Differentiating (\ref{eq:sys-1}) by $x$, we have
\[
\left(\frac{\partial F_\circ}{\partial x}\right) \cdot \frac{\partial P}{\partial F_\circ}(F_\circ, F_1, t, u, x) + \frac{\partial P}{\partial x}(F_\circ, F_1, t, u, x) = 0.
\]
If there is some Puiseux series $X$ such that the substitution $\frac{\partial P}{\partial F_\circ}(F_\circ, F_1, t, u, X) = 0$, then automatically we have $\frac{\partial P}{\partial x}(F_\circ, F_1, t, u, X) = 0$ after substitution. A simple computation of the partial differentiations gives the following equations:
\begin{equation} \label{eq:sys-2}
  2 t^2 X^2 F_\circ(X) + (2 t^2 X^2 + X^2 u t - X u t - t^2 X - X + 1 - t^2 X F_1) = 0,
\end{equation}
\begin{align}
  \begin{split}\label{eq:sys-3}
    2 t^2 X F_\circ^2(X) + (4 t^2 X + 2 X u t - u t &- t^2 - 1 - t^2 F_1) F_\circ(X) \\ &+ 2 t^2 X - t^2 + 2 X u t - u t - t^2 F_1 = 0.
  \end{split}
\end{align}
Along with (\ref{eq:sys-1}) with $x$ substituted by $X$, we have a system of three polynomial equations with three unknowns $F_\circ(X), X, F_1$. To see that there is only one power series $X$ in $t$ that satisfies (\ref{eq:X}), we observe that (\ref{eq:X}) can be written as $X = 1 + tQ(X)$, where $Q(X)$ is a polynomial in $X$ with coefficients polynomial in $u,t$. Therefore, we have $X=1+O(t)$, and its coefficients can be computed iteratively, thus determined, and they are clearly polynomials in $u$. After picking the unique $X$, we can thus solve for $F_1$ (preferably with a computer algebra system), which gives the announced result. 
\end{proof}

By substituting (\ref{eq:f1}) into (\ref{eq:fn-eq}), we can solve for $F_\circ$, which means $F_\circ \equiv F_\circ(t,u,x)$ is also an algebraic series in $t, u, x$. We omit the exact expression here.

The first terms of $F_1(t, 1)$, whose coefficient of $t^n$ is the number of intervals in $(\leq_M, \motz_n)$ thanks to Corollary~\ref{coro:phi-cover}, are
\[
F_1(t, 1) = t + 2t^2 + 5t^3 + 14t^4 + 43t^5 + 140t^6 + 477t^7 + 1638t^8 + 6106t^9 + \cdots.
\]
These values agree with experimental results. The sequence
\[
1, 2, 5, 14, 43, 140, 477, 1638, \ldots
\]
has appeared on OEIS as \OEIS{A307787}, which counts the number of valid hook configurations of $132$-avoiding permutations (\textit{cf.} \cite{valid-hook}). 

\section{Extension to Schr\"oder paths}

All our constructions and results can be transferred to Schr\"oder paths, which are essentially Motzkin paths where diagonal steps are counted as of length $2$. It is thus clear that every Schr\"oder path is of even length. We denote by $\schr_{2n}$ the set of Schr\"oder paths of length $2n$. We can construct a partial order $(\leq_\schr, \schr_{2n})$ in the same way as $(\leq_\motz, \motz_n)$, since the only difference is how we count the length of a path.

By Corollary~\ref{coro:phi-cover}, elements in each connected component of $(\leq_\motz, \motz_n)$ have the same number of diagonal steps, hence are Schr\"oder paths of the same length. Therefore, the partial order $(\leq_\schr, \schr_{2n})$ is also isomorphic to a disjoint union of $(\leq_\dyck, \dyck(\nu))$ with appropriate $\nu$'s. We can thus deduce the following enumeration results for $(\leq_\schr, \schr_{2n})$.

\begin{prop}
  There are $2^n$ connected components in $(\leq_\schr, \schr_{2n})$.
\end{prop}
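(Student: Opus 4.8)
The plan is to reduce the count to Proposition~\ref{prop:conn-motz-refined} via the correspondence between Schr\"oder paths and Motzkin paths. First I would observe that a Schr\"oder path of length $2n$, viewed as a Motzkin path, has some number $\ell$ of diagonal steps and equal numbers $k$ of north and east steps, subject to $2k + 2\ell = 2n$, that is $k + \ell = n$; as a Motzkin path its size is $2k + \ell = n + k$. Thus, as a set, $\schr_{2n}$ is the disjoint union over $0 \le k \le n$ of the sets of Motzkin paths of size $n+k$ having exactly $k$ north steps. By the discussion following Corollary~\ref{coro:phi-cover} (the paragraph opening Section~5), $(\leq_\schr, \schr_{2n})$ is the disjoint union of the corresponding sub-posets of the $(\leq_\motz, \motz_{n+k})$, so its connected components are precisely the connected components of $(\leq_\motz, \motz_{n+k})$ whose elements have $k$ north steps, ranging over $k = 0, 1, \ldots, n$.

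Then I would invoke Proposition~\ref{prop:conn-motz-refined}: the number of connected components of $(\leq_\motz, \motz_{n+k})$ whose elements have $k$ north steps, equivalently $(n+k) - 2k = n-k$ diagonal steps, is $\binom{(n+k)-k}{k} = \binom{n}{k}$. Summing over $k$ then gives the total number of connected components of $(\leq_\schr, \schr_{2n})$ as $\sum_{k=0}^{n}\binom{n}{k} = 2^n$, which is the claim.

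As an alternative, more transparent route, one can argue bijectively: by Corollary~\ref{coro:phi-cover} and Proposition~\ref{prop:phi-type}, connected components of $(\leq_\schr, \schr_{2n})$ are in bijection with the types $\nu$ of the associated Dyck paths, and a short count of lengths and letters shows these are exactly the words in $\{N, E\}$ having exactly $n-1$ north letters and arbitrarily many east letters, subject to avoiding $EE$. Such a word is determined by which of the $n$ gaps around its $n-1$ north letters contain an east letter, so there are $2^n$ of them.

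I do not expect a genuine obstacle here; the only point requiring care is the bookkeeping, namely correctly translating ``length $2n$ for Schr\"oder paths'' into the Motzkin size $n+k$ and tracking how the diagonal-step count feeds into Proposition~\ref{prop:conn-motz-refined}. Once that translation is pinned down, both the summation argument and the bijective argument go through immediately.
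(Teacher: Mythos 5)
Your proof is correct, and in fact both of your routes appear in the paper, just with their roles reversed. The paper's primary argument is essentially your ``alternative'' one: it counts classes directly by observing that each of the $n$ unit increases in $y$-coordinate along a Schr\"oder path ending at $(n,n)$ is achieved by either a north step or a diagonal step, so there are $2^n$ classes, hence $2^n$ components by Corollary~\ref{coro:phi-cover}. (Your version of this phrases the same binary choice in terms of types avoiding $EE$ with $n-1$ north letters and $n$ gaps, which is an equivalent encoding via Proposition~\ref{prop:phi-type}.) Your main route --- decomposing $\schr_{2n}$ over $k$ into Motzkin paths of size $n+k$ with $n-k$ diagonal steps and summing $\binom{n}{k}$ via Proposition~\ref{prop:conn-motz-refined} --- is exactly what the paper relegates to its final sentence (``this result can also be seen as a consequence of Proposition~\ref{prop:conn-motz-refined}, with a translation between the Schr\"oder and the Motzkin path length''), and you have carried out that translation correctly, including the check that $2k+\ell = n+k$ and $\binom{(n+k)-k}{k}=\binom{n}{k}$. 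The direct class count is shorter; your summation argument makes the link to the refined Motzkin enumeration explicit. Either is a complete proof.
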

\begin{proof}
  We observe that Schr\"oder paths in $\schr_{2n}$ are exactly those ending at $(n,n)$. On the path, when passing from $y$-coordinate $k$ to $k+1$, we have either an east step or a diagonal steps. We thus have $2^n$ different classes in $\schr_{2n}$, each corresponding to a connected component according to Corollary~\ref{coro:phi-cover}. This result can also be seen as a consequence of Proposition~\ref{prop:conn-motz-refined}, with a translation between the Schr\"oder and the Motzkin path length.
\end{proof}

Let $G(t,u)$ be the generating function for intervals in $(\leq_\schr, \schr_{2n})$ for all $n$ defined as
\[
G(t,u) = \sum_{n > 0} t^n \sum_{P, Q \in \schr_{2n}, P \leq_\schr Q} u^{\#\; \mathrm{diagonal\;steps\;in}\;P}.
\]
We can now deduce $G(t,u)$ from $F_1(t,u)$.
\begin{thm} \label{thm:schr-int-cnt}
  The generating function $G(t,u)$ of intervals in $(\leq_\schr, \schr_n)$ is algebraic. More precisely, let $X'$ is the formal power series in $t$ with coefficients polynomial in $u$ that satisfies the equation
\begin{equation} \label{eq:X-s}
  u^2 t^2 X'^5 - t (1 + u^2 t) X'^4 - 2 u t X'^3 + 2 u t X'^2 + X' - 1 = 0.
\end{equation}
Then $G(t,u)$ can be expressed in terms of $X'$ as
\begin{equation} \label{eq:f1-s}
  G(t,u) = \frac{u^2 t^2 X'^4 - (u^2 t^2 + u t^2 + u t + 2t) X'^3 + (1 + u t + t) X' - 1}{t X' (u t X'^2 - 1)}.
\end{equation}
\end{thm}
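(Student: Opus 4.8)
The plan is to reduce Theorem~\ref{thm:schr-int-cnt} to Theorem~\ref{thm:motz-int-cnt} by a change of variables, rather than by redoing the functional-equation analysis from scratch. The key observation is that a Schr\"oder path and the corresponding Motzkin path differ only in how diagonal steps contribute to the length: a diagonal step has length $2$ in a Schr\"oder path but length $1$ in a Motzkin path. Since Corollary~\ref{coro:phi-cover} tells us that all elements of a connected component of $(\leq_\motz,\motz_n)$ — equivalently, of a fixed class — have the same number of diagonal steps, the poset $(\leq_\schr,\schr_{2n})$ is a disjoint union of the same building blocks $(\leq_\dyck,\dyck(\nu))$, merely re-graded by length. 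Concretely, if a Motzkin path $P$ has $k$ north steps, $k$ east steps, and $\ell$ diagonal steps, then it has Motzkin length $n=2k+\ell$, while the associated Schr\"oder path has length $2k+2\ell$, i.e.\ $2n'$ with $n'=k+\ell=n-\frac{\ell}{2}\cdot 0$; more usefully, $n' = k+\ell$ and $n=2k+\ell$, so $n' = n - k$ where $k$ is recovered as $k=(n-\ell)/2$. The cleanest way to phrase this: in the interval generating function, an interval counted by $t^n u^\ell$ in $F_1$ (here $\ell$ is the number of diagonal steps, tracked by $u$, and $n$ is the Motzkin length) should be counted by $t^{n'}u^\ell$ in $G$, where $n'$ is half the Schr\"oder length. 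Since $n = 2k+\ell$ and $n' = k+\ell$, we have $n = 2n' - \ell$. Therefore $G(t,u)$ is obtained from $F_1$ by the substitution that sends $t^n u^\ell \mapsto t^{2n'-\ell}u^\ell$ read backwards, i.e.\ formally $G(t,u) = F_1(t^{?},\cdot)$ — I would determine the exact substitution by matching: writing $F_1(t,u)=\sum a_{n,\ell} t^n u^\ell$, we want $G(t,u)=\sum a_{n,\ell} t^{(n+\ell)/2} u^\ell$, which since $n+\ell = 2(k+\ell)$ is always an even exponent over $2$; this is realized by the substitution $t \mapsto \sqrt{t}$, $u \mapsto u\sqrt{t}$, i.e.\ $G(t,u) = F_1(\sqrt t,\, u\sqrt t)$. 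Checking on a single step: $P=D$ gives the Schr\"oder path $EE$... actually the single diagonal step, length $2$, so $n'=1$, $\ell=1$, contributing $tu$ to $G$; in $F_1$ it contributes $tu$ (Motzkin length $1$, one diagonal); under $t\mapsto\sqrt t, u\mapsto u\sqrt t$ this becomes $\sqrt t \cdot u\sqrt t = tu$. Good. The step $NE$ (Motzkin length $2$, no diagonal) contributes $t^2$ to $F_1$ and should contribute $t$ to $G$ (Schr\"oder length $2$); indeed $(\sqrt t)^2 = t$. Good.

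With the substitution identified, the proof is then purely mechanical: substitute $t\mapsto \sqrt t$ and $u\mapsto u\sqrt t$ into equation~\eqref{eq:X} defining $X$ and into equation~\eqref{eq:f1} expressing $F_1$, set $X' := X(\sqrt t, u\sqrt t)$, and clear the resulting square roots. I would verify that under this substitution $u^2t^2 \mapsto u^2 t^3$, $t^2(1+u^2)\mapsto t(1+u^2 t)$, $2ut \mapsto 2u t^{3/2}$... so the coefficients involve half-integer powers, but after multiplying through by an appropriate power of $\sqrt t$ and possibly rescaling $X'$ by a power of $\sqrt t$ the half-integer powers should disappear. Looking at the target equation~\eqref{eq:X-s}, comparing term by term with~\eqref{eq:X}: the $X^5$ coefficient goes $u^2t^2 \to u^2 t^2$ (unchanged!), the $X^4$ coefficient $t^2(1+u^2) \to t(1+u^2 t)$, the $X^3$ coefficient $2ut \to 2ut$ (unchanged), the $X^2$ coefficient $2ut \to 2ut$ (unchanged), and $X - 1 \to X' - 1$. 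So the change is confined to exactly the $X^4$ term, $t^2 + u^2 t^2 \to t + u^2 t^2$, i.e.\ one factor of $t$ is shaved off the $1$-part but the $u^2 t^2$ part is untouched. This is not a uniform substitution in $t$ alone, which confirms that the correct substitution mixes $t$ and $u$; I would solve for it precisely by demanding that~\eqref{eq:X} map to~\eqref{eq:X-s}, perhaps via $X' = t^{-a} X$ with the substitution $t\mapsto t^b$, $u \mapsto u t^c$, and pin down $a,b,c$ from the five coefficient constraints (they are over-determined, so agreement is itself a sanity check). The same substitution, applied to~\eqref{eq:f1}, must then yield~\eqref{eq:f1-s}, and I would state this as a direct computation, leaving the details to a computer algebra system as the paper does for~\eqref{eq:f1} itself.

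The main obstacle — really the only non-routine point — is getting the bookkeeping of the re-grading exactly right, in particular making sure the $u$-statistic is consistent: in $F_1$ the variable $u$ marks $\ds(S)$, which by Proposition~\ref{prop:ds} equals the number of diagonal steps in the associated Motzkin path $\phi(R)$ and is constant across a synchronized interval, whereas in $G$ the variable $u$ marks the number of diagonal steps in the lower Motzkin path $P$; by Proposition~\ref{prop:ds}(2) these agree, so the two uses of $u$ are the same statistic and no correction is needed there. What does need care is that the length re-grading is applied consistently to every occurrence of $t$ — including the "$+2$ extra steps" bookkeeping hidden inside the derivation of~\eqref{eq:fn-eq}, which is why it is cleaner to substitute into the \emph{final} algebraic characterization~\eqref{eq:X}--\eqref{eq:f1} rather than re-derive a functional equation for $G$. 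Once the substitution $t \mapsto \phi_1(t,u)$, $u\mapsto\phi_2(t,u)$ (with $\phi_1,\phi_2$ monomials, determined above) is fixed and checked against the first few coefficients of $G(t,1)$ computed by hand or machine, algebraicity of $G$ is immediate since it is an algebraic substitution into an algebraic series, and the explicit equations~\eqref{eq:X-s} and~\eqref{eq:f1-s} follow by clearing denominators and radicals. I would close the proof by remarking, as the paper does after Theorem~\ref{thm:motz-int-cnt}, that one could alternatively rederive a one-catalytic-variable functional equation for the full three-variable series $G_\circ(t,u,x)$ directly — using the decomposition of Proposition~\ref{prop:sync-decomp} with the restriction of Proposition~\ref{prop:decomp-restrict} and the only-modified-counting-of-diagonal-steps observation — and apply \cite{BMJ} again, but the substitution argument is shorter.
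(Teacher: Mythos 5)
Your proposal is correct and follows essentially the same route as the paper, whose entire proof is the observation that $G(t^2,u)=F_1(t,ut)$ (equivalently $G(t,u)=F_1(\sqrt t,\,u\sqrt t)$, exactly the substitution you derive) followed by substitution into the formulas of Theorem~\ref{thm:motz-int-cnt}. Your only hesitation is unnecessary: no rescaling of $X'$ is needed, since under $t\mapsto\sqrt t$, $u\mapsto u\sqrt t$ every coefficient of \eqref{eq:X} and \eqref{eq:f1} already lands on an integer power of $t$, yielding \eqref{eq:X-s} and \eqref{eq:f1-s} directly with $X'=X(\sqrt t,u\sqrt t)$.
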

\begin{proof}
  Since diagonal steps are counted as $2$ towards the length of a Schr\"oder path, we have
  \[
  G(t^2, u) = F_1(t, ut).
  \]
  The result follows from appropriate substitutions of formulas in Theorem~\ref{thm:motz-int-cnt}.
\end{proof}

The first terms of $G(t,1)$, whose coefficient of $t^n$ is the number of intervals in $(\leq_\schr, \schr_{2n})$, are
\[
G(t,1) = 2t + 8t^2 + 46t^3 + 320t^4 + 2500t^5 + 21120t^6 +188758t^7 + 1760256t^8 + \cdots.
\]
The sequence of its coefficients is not yet on OEIS.

\section{Discussions}

We now discuss the result of Baril and Pallo in \cite{MR3128387}. It has a flavor that is very close to our result. More precisely, they analyzed the sub-poset of the Tamari lattice induced by the so-called ``Motzkin words'', which are well-parenthesized words defined by the generative grammar $S \rightsquigarrow \epsilon \mid (SS)$, with $\epsilon$ the empty word. If we read opening (resp. closing) parenthesis as north (resp. east) steps, the set of Motzkin words can be regarded as a set of Dyck paths, with the generative grammar $R \rightsquigarrow \epsilon \mid N \cdot R \cdot R \cdot E$. We can prove by a simple induction that every Dyck path generated in this way, which corresponds to a Motzkin word, must have the form $N \cdot R' \cdot E$, with $R'$ a Dyck path whose type avoids $NN$. However, we know from Proposition~5.2 and Theorem~1.2 of \cite{PRV2014extension} that Dyck paths with type $w$ are in bijection with those of type $\overleftarrow{w}$, where $\overleftarrow{w}$ is the word $w$ read from right to left while replacing $N$ by $E$ and $E$ by $N$. Furthermore, this bijection is an order isomorphism from the Tamari lattice to its order dual. As a consequence, under our definition of the Tamari lattice in Section~2, the restriction of the Tamari lattice to Dyck paths whose types avoid $NN$ is isomorphic to the order dual of the restriction to Dyck paths whose types avoid $EE$, which is exactly the poset we studied in Section~3, isomorphic to our poset of Motzkin paths.

Then why did Baril and Pallo had a different poset (which is connected by a maximal element) from ours, if we studied the same poset restricted to (roughly) the same set of elements? It is because our definitions of the Tamari lattice on Dyck paths differ. More precisely, both the definition here and that in \cite{MR3128387} can be seen as coming from the Tamari lattice defined on binary trees, where the order relation is given by tree rotation (\textit{cf}. \cite{PRV2014extension}). We then have different ways to convert binary trees into Dyck paths. Given a binary tree $T$, either it is empty, denoted by $T = \epsilon_T$, or it has the form $T = (T_\ell, T_r)$, where $T_\ell$ (resp. $T_r$) is the left (resp. right) sub-tree. There are at least two ways to define a bijection from binary trees to Dyck paths recursively. The first one is what we take here implicitly, which is also taken in various other works \cite{PRV2014extension,BB2009intervals,nonsep}:
\[ \delta_1(\epsilon_T) = \epsilon, \quad \delta_1((T_\ell, T_r)) = \delta_1(T_\ell) \cdot N \cdot \delta_1(T_r) \cdot E. \]
Another is the one taken in \cite{MR3128387}:
\[ \delta_2(\epsilon_T) = \epsilon, \quad \delta_2((T_\ell, T_r)) = N \cdot \delta_2(T_\ell) \cdot E \cdot \delta_2(T_r). \]
Since the mappings are different, it is reasonable that the posets obtained are different, as the same tree is mapped to different Dyck paths. Via the two mappings, we are in fact looking at different portions of the Tamari lattice, leading to different posets.

Motivated by the generalization from the Tamari lattice to the $m$-Tamari lattice, we can also consider similar constructions defined on $m$-ballot paths, a generalization of Dyck paths. An \emph{$m$-ballot path} is a lattice path formed by north steps and east steps that always stays above the $m$-diagonal $x = my$. The construction in Section~\ref{sec:prelim} that defines partial orders on Dyck path and Motzkin paths, when applied to $m$-ballot paths, gives the $m$-Tamari lattice (see \cite{bergeron-preville}). For the counterpart of Motzkin paths in this case, there are two natural choices for the ``diagonal step'': either we take the usual diagonal step $D = (1,1)$, or we take the $m$-diagonal step $D_m = (m,1)$. For both cases, the structure of the poset is not clear and requires further exploration.

\section*{Acknowledgement}

The author thanks Cyril Banderier for raising the question on a possible generalization of the Tamari lattice on Schr\"oder paths. The discussion took place during the Workshop ``Enumerative Combinatorics'' at Erwin Schr\"odinger Institute.

\bibliographystyle{alpha}
\bibliography{motzkin}

\begin{thebibliography}{Fan18b}

\bibitem[BB09]{BB2009intervals}
O.~Bernardi and N.~Bonichon.
\newblock Intervals in {C}atalan lattices and realizers of triangulations.
\newblock {\em J. Combin. Theory Ser. A}, 116(1):55--75, 2009.

\bibitem[BMJ06]{BMJ}
M.~Bousquet-M{\'e}lou and A.~Jehanne.
\newblock Polynomial equations with one catalytic variable, algebraic series
  and map enumeration.
\newblock {\em J. Combin. Theory Ser. B}, 96(5):623--672, 2006.

\bibitem[BP14]{MR3128387}
J.-L. Baril and J.-M. Pallo.
\newblock Motzkin subposets and {M}otzkin geodesics in {T}amari lattices.
\newblock {\em Inform. Process. Lett.}, 114(1-2):31--37, 2014.

\bibitem[BPR12]{bergeron-preville}
F.~Bergeron and L.-F. Pr{\'e}ville-Ratelle.
\newblock Higher trivariate diagonal harmonics via generalized {T}amari posets.
\newblock {\em J. Comb.}, 3(3):317--341, 2012.

\bibitem[Cal04]{callan}
D.~Callan.
\newblock Two bijections for {D}yck path parameters.
\newblock arXiv:math/0406381 [math.CO], 2004.

\bibitem[Cha05]{chapoton-tamari}
F.~Chapoton.
\newblock Sur le nombre d'intervalles dans les treillis de {T}amari.
\newblock {\em S\'em. Lothar. Combin.}, 55:Art. B55f, 18 pp. (electronic),
  2005.

\bibitem[CP13]{interval-poset}
G.~Ch{\^a}tel and V.~Pons.
\newblock Counting smaller trees in the {T}amari order.
\newblock In {\em 25th {I}nternational {C}onference on {F}ormal {P}ower
  {S}eries and {A}lgebraic {C}ombinatorics ({FPSAC} 2013)}, Discrete Math.
  Theor. Comput. Sci. Proc., AS, pages 433--444. Assoc. Discrete Math. Theor.
  Comput. Sci., Nancy, 2013.

\bibitem[Fan18a]{sticky}
W.~Fang.
\newblock Planar triangulations, bridgeless planar maps and tamari intervals.
\newblock {\em European J. Combin.}, 70:75--91, 2018.

\bibitem[Fan18b]{trinity}
W.~Fang.
\newblock A trinity of duality: non-separable planar maps, $\beta$-(0,1) trees
  and synchronized intervals.
\newblock {\em Adv. Appl. Math.}, 95:1--30, 2018.

\bibitem[FP05]{MR2202340}
L.~Ferrari and R.~Pinzani.
\newblock Lattices of lattice paths.
\newblock {\em J. Statist. Plann. Inference}, 135(1):77--92, 2005.

\bibitem[FPR17]{nonsep}
W.~Fang and L.-F. Pr{\'e}ville-Ratelle.
\newblock The enumeration of generalized {T}amari intervals.
\newblock {\em European J. Combin.}, 61:69--84, 2017.

\bibitem[LR98]{loday1998hopf}
J.-L. Loday and M.~O. Ronco.
\newblock Hopf algebra of the planar binary trees.
\newblock {\em Adv. Math.}, 139(2):293--309, 1998.

\bibitem[PRV17]{PRV2014extension}
L.-F. Pr{\'e}ville-Ratelle and X.~Viennot.
\newblock The enumeration of generalized {T}amari intervals.
\newblock {\em Trans. Amer. Math. Soc.}, 369(7):5219--5239, July 2017.
\newblock arXiv:1406.3787.

\bibitem[Sta63]{associahedron}
J.~D. Stasheff.
\newblock Homotopy associativity of {$H$}-spaces. {I}, {II}.
\newblock {\em Trans. Amer. Math. Soc. 108 (1963), 275-292; ibid.},
  108:293--312, 1963.

\bibitem[Tam62]{tamari}
D.~Tamari.
\newblock The algebra of bracketings and their enumeration.
\newblock {\em Nieuw Arch. Wisk. (3)}, 10:131--146, 1962.

\end{thebibliography}

\end{document}